\theoremstyle{plain}
\newtheorem{theorem}{Theorem}[section]
\newtheorem{proposition}[theorem]{Proposition}
\newtheorem{lemma}[theorem]{Lemma}
\newtheorem{corollary}[theorem]{Corollary}
\theoremstyle{definition}
\theoremstyle{remark}
\icmltitlerunning{NysADMM}
\definecolor{purple}{rgb}{0.3,0.0,.4}
\newcommand{\nystrom}{Nystr{\"o}m}
\newcommand{\deff}{d_\textup{eff}}
\begin{document}

\twocolumn[
\icmltitle{NysADMM: faster composite convex optimization via low-rank approximation}



\icmlsetsymbol{equal}{*}

\begin{icmlauthorlist}
\icmlauthor{Shipu Zhao}{equal,yyy}
\icmlauthor{Zachary Frangella}{equal,zzz}
\icmlauthor{Madeleine Udell}{zzz}
\end{icmlauthorlist}

\icmlaffiliation{yyy}{Cornell University, Ithaca, NY, USA.}
\icmlaffiliation{zzz}{Stanford University, Stanford, CA, USA}

\icmlcorrespondingauthor{Shipu Zhao}{sz533@cornell.edu}

\icmlkeywords{Machine Learning, ICML}

\vskip 0.3in
]



\printAffiliationsAndNotice{\icmlEqualContribution} 

\begin{abstract}
This paper develops a scalable new algorithm, called NysADMM,
to minimize a smooth convex loss function with a convex regularizer.
NysADMM accelerates the inexact Alternating Direction Method of Multipliers (ADMM)
by constructing a preconditioner for the ADMM subproblem from a randomized low-rank Nystr{\"o}m approximation. 
NysADMM comes with strong theoretical guarantees: 
it solves the ADMM subproblem in a constant number of iterations
when the rank of the Nystr{\"o}m approximation is the effective dimension of the subproblem regularized Gram matrix.
In practice, ranks much smaller than the effective dimension can succeed, so NysADMM uses an adaptive strategy to choose the rank that enjoys analogous guarantees. 
Numerical experiments on real-world datasets demonstrate that
NysADMM can solve important applications, such as the lasso, logistic regression, and support vector machines, in half the time (or less) required by standard solvers. 
The breadth of problems on which NysADMM beats standard solvers is a surprise:
it suggests that ADMM is a dominant paradigm 
for numerical optimization across a wide range of statistical learning problems 
that are usually solved with bespoke methods. 
\end{abstract}

\section{Introduction}
\label{section:intro}
Consider the composite convex optimization problem
\small
\begin{equation}\label{eq1}
	\text{minimize}_{x\in\mathbb{R}^{d}} \ \ell(Ax; b) + r(x).
\end{equation}
\normalsize
We assume that $\ell$ and $r$ are convex and $\ell$ is smooth. 
In machine learning, generally $\ell$ is a loss function, 
$r$ is a regularizer, $A\in \mathbb{R}^{n\times d}$ is a feature matrix, and $b\in \mathbb{R}^{n}$ is the label or response. 
Throughout the paper we assume that a solution to \eqref{eq1} exists.
A canonical example of \eqref{eq1} is the lasso problem, 
\small
\begin{equation}\label{eq2}
	\text{minimize} \ \frac{1}{2}\|Ax - b\|^2_2 + \gamma\|x\|_1,
\end{equation}
\normalsize
where $\ell(Ax; b) = \frac{1}{2}\|Ax - b\|^2_2$ and $r(x) = \gamma\|x\|_1$. 
We discuss more applications of \eqref{eq1} in \Cref{section:apps}.

The alternating directions method of multipliers (ADMM) is a popular algorithm 
to solve optimization problems of the form \eqref{eq1}.
However, when the matrix $A$ is large, 
each iteration of ADMM requires solving a large subproblem.
For example, consider the lasso where the loss $\ell$ is quadratic. At each iteration, ADMM solves a regularized least-squares problem
at a cost of $O(nd^2)$ flops.
On the other hand, it is not necessary to solve each subproblem exactly 
to ensure convergence: ADMM strategies that solve the subproblems inexactly
are called inexact ADMM, 
and can be shown to converge when 
the sequence of errors is summable \cite{eckstein1992douglas}.
Unfortunately, it can be challenging even to satisfy this relaxed criterion.
Consider again the lasso problem.
At each iteration, inexact ADMM
solves the regularized least-squares subproblem \eqref{ADMMSubProb} approximately,
for example, using the iterative method of conjugate gradients (CG). We call this method inexact ADMM with CG.
The number of CG iterations required to achieve accuracy $\epsilon$ 
increases with the square root of the 
condition number $\kappa_2$ of the regularized Hessian, $O\left(\sqrt{\kappa_{2}}\log(\frac{\kappa_{2}}{\epsilon})\right)$. 
Alas, the condition number of large-scale data matrices is generally high,
and later iterations of inexact ADMM require high accuracy,
so inexact ADMM with CG still converges too slowly to be practical.

In this work we show how to speed up inexact ADMM using preconditioned conjugate gradients (PCG) as a subproblem solver. 
We precondition with randomized Nystr{\"o}m preconditioning \cite{frangella2021randomized}, a technique inspired by recent developments in randomized numerical linear algebra (RandNLA).
We call the resulting algorithm
NysADMM (``nice ADMM''):
inexact ADMM with PCG using randomized Nystr{\"o}m preconditioning.
The Nystr{\"o}m preconditioner reduces the number of iterations required to solve the subproblem to $\epsilon$-accuracy to $O\left(\log(\frac{1}{\epsilon})\right)$,
independent of the condition number. 
For non-quadratic loss functions, NysADMM uses linearized inexact ADMM and accelerates the linear subproblem solve similarly.



\subsection{Contributions}
\label{subsection:contributions}

\begin{enumerate}
    \item We provide a general algorithmic framework for solving large scale lasso, $\ell_1$-regularized logistic regression, and SVM problems. 
    \item Our theory shows that at each iteration only a constant number of matrix vector products (matvecs) are required to solve the ADMM subproblem, provided we have constructed the preconditioner appropriately. 
    If the loss function is quadratic, only a constant number of matvecs are required to achieve convergence.  
    \item We develop a practical adaptive algorithm that increases the rank until the conditions of our theory are met, which ensures the theoretical benefits of the method can be realized in practice.
    \item Even a preconditioner with lower rank often succeeds in speeding up inexact ADMM with PCG. 
    Our analysis is also able to explain this phenomenon.
    \item Our algorithm beats standard solvers such as glmnet, SAGA, and LIBSVM on large dense problems like lasso, logistic regression, and kernalized SVMs: it yields equally accurate solutions and often runs 2--4 times faster. 
\end{enumerate}

\subsection{Related work}
\label{subsection:related-work} 
Our work relies on recent advancements in RandNLA for solving regularized least squares problems $(A^TA + \mu I)x = A^Tb$ for $x$, 
given a design matrix $A\in \mathbb{R}^{n\times d}$, righthand side $b \in \mathbb{R}^n$, and regularization $\mu \in \mathbb{R}$, using a \emph{sketch} of the design matrix $A$ \cite{lacotte2020effective}. 
NysADMM adapts the randomized Nystr{\"o}m preconditioner of \cite{frangella2021randomized}.
These algorithms begin by forming a \emph{sketch} $Y = A\Omega$ of $A$ (or $A^T$)
with a random dimension reduction map $\Omega \in \mathbb{R}^{d\times s}$ \cite{martinsson2020randomized,woodruff2014sketching}.
For example, $\Omega$ may be chosen to have iid Gaussian entries.
These algorithms obtain significant computational speedups by using a sketch size $s\ll \textrm{min}\{n,d\}$
and working with the sketch in place of the original matrix to construct 
a preconditioner for the linear system.
\citet{frangella2021randomized} and \citet{lacotte2020effective} show that these randomized preconditioners 
work well when the sketch size grows with the \emph{effective dimension} (\cref{eqn:deff-proof}) of the Gram matrix
(assuming, for \citet{lacotte2020effective} that we have access to a matrix square root). 
As the effective dimension is never larger than $d$ and often significantly smaller, 
these results substantially improve on prior work in randomized preconditioning \cite{meng2014lsrn,rokhlin2008fast} 
that requires a sketch size $s\gtrsim d$.
Many applications require even smaller sketch sizes: for example, for NysADMM, a fixed sketch size $s=50$ suffices
even for extremely large problems.

We are not the first to use RandNLA to accelerate iterative optimization.
\citet{pilanci2017newton, gower2019rsn} both use iterative sketching to accelerate Newton's method, while \citet{chowdhuri2020speeding} use randomized preconditioning to accelerate interior point methods for linear programmming.
The approach taken here is closest in spirit to \cite{chowdhuri2020speeding}, as we also use randomized preconditioning.
However, the preconditioner used in \cite{chowdhuri2020speeding} requires the data matrix to have many more columns than rows, while ours can handle any (sufficiently large) dimensions.

NysADMM can solve many traditional machine learning problems, 
such as lasso, regularized logistic regression, and support vector machines (SVMs).
In contrast, standard solvers for these problems
use a wider variety of convex optimization techniques. 
For example, one popular lasso solver, glmnet \cite{friedman2010regularization},
relies on coordinate descent (CD), 
while solvers for SVMs, such as LIBSVM \cite{libsvm}, 
more often use
sequential minimal optimization \cite{platt1998fast}, 
a kind of pairwise CD on the dual problem. 
For regularized logistic regression, especially for $\ell_1$ regularization, 
stochastic gradient algorithms are most commonly used \cite{schmidt2017minimizing, NIPS2014_ede7e2b6}. 
Other authors propose to solve lasso with ADMM \cite{MAL-016, yue2018implementing}. 
Our work, motivated by the ADMM quadratic programming framework
of \citet{osqp}, 
is the first to 
accelerate ADMM with randomized preconditioning, 
thereby improving on the performance of standard CD or stochastic gradient solvers for each of these important classes of machine learning problems on 
large-scale dense data. 
Unlike \citet{osqp}, our work relies on inexact ADMM and can handle non-quadratic loss functions, which allows NysADMM to solve problems such as regularized logistic regression. 

\subsection{Organization of the paper}
\Cref{section:algo} introduces the NysADMM algorithm and necessary background from RandNLA.
\Cref{section:apps} lists a variety of applied problems 
that can be solved by NysADMM.
\Cref{section:Convergence} states the theoretical guarantees for NysADMM. 
\Cref{section:experiments} compares NysADMM and standard optimization solvers numerically on several applied problems.
\Cref{section:conclusion} summarizes the results of the paper and discusses directions for future work.

\subsection{Notation and preliminaries}
We call a matrix psd if it is positive semidefinite. The notation $a\gtrsim b$ means that $a\geq Cb$ for some absolute constant $C$.
Given a matrix $H$, we denote its spectral norm by $\|H\|$. 
We denote the Moore-Penrose pseudoinverse of a matrix $M$ by $M^{\dagger}$.
For $\rho>0$ and a symmetric psd matrix $H$, we define $H_\rho = H+\rho I$.
We say a positive sequence $\{\varepsilon^{k}\}_{k=1}^{\infty}$ is summable if $\sum_{k=1}^{\infty}\varepsilon^{k}<\infty$.
We denote the Loewner ordering on the cone of symmetric psd matrices by $\preceq$, that is $A\preceq B$ if and only if $B-A$ is psd.

\section{Algorithm}
\label{section:algo}
\subsection{Inexact linearized ADMM}
\label{subsection:inexactlinearizedADMM}
To solve problem \eqref{eq1}, we apply the ADMM framework.
\Cref{alg-admm} shows the standard ADMM updates, 
where the regularizer $r = g+h$ is split into a smooth part $g$ and a nonsmooth part $h$. 
\begin{algorithm}[H]
	\centering
	\caption{ADMM}
	\label{alg-admm}
	\begin{algorithmic}
	\INPUT{feature matrix $A$, response $b$, loss function $\ell$, regularization $g$ and $h$, stepsize $\rho$}
		\REPEAT
		\STATE{$x^{k+1} = \text{argmin}_x \{\ell(Ax; b) + g(x) + \frac{\rho}{2}\|x - z^k + u^k\|^2_2\}$}
		\STATE{$z^{k+1} = \text{argmin}_z \{h(z) + \frac{\rho}{2}\|x^{k+1} - z + u^k\|^2_2\}$}
		\STATE{$u^{k+1} = u^k + x^{k+1} - z^{k+1} $}
		\UNTIL{convergence}
	\OUTPUT{solution $x_\star$ of problem \eqref{eq1}}
	\end{algorithmic}
\end{algorithm} 

In each iteration, two subproblems are solved sequentially to update variables $x$ and $z$. 
The $z$-subproblem often has a closed-form solution. 
For example, if $h(x) = \|x\|_1$, the $z$-subproblem is the soft thresholding, 
and if $h$ is the indicator function of a convex set $\mathcal{C}$, 
the $z$-subproblem is projection onto the set $\mathcal{C}$. 

There is usually no closed-form solution for the $x$-subproblem. 
Instead, it is usually solved inaccurately by an iterative scheme,
especially for large-scale applications. 
To simplify the subproblem, inspired by linearized ADMM, 
we assume $\ell$ and $g$ are twice differentiable
and notice that the $x$ update is close to the minimum of a quadratic function 
given by the Taylor expansion of $\ell$ and $g$ at the current iterate:
\begin{equation}\label{eq4}	
	\begin{aligned}
		& \tilde x^{k+1} = \text{argmin}_x \{ \ell(A \tilde x^k ; b) + (x - \tilde x^k)^TA^T \nabla \ell(A \tilde x^k ; b)  \\
		& + \frac{1}{2}(x - \tilde x^k)^T A^TH^{\ell} (A \tilde x^k ; b) A(x - \tilde x^k) + g( \tilde x^k) \\
		& + (x - \tilde x^k)^T\nabla g( \tilde x^k) + \frac{1}{2}(x - \tilde x^k)^T H^g(\tilde x^k) (x - \tilde x^k) \\ 
		& + \frac{\rho}{2}\|x - \tilde z^k + \tilde u^k\|^2_2\}.
	\end{aligned}
\end{equation}
\normalsize
Here $H^\ell$ and $H^g$ are the Hessian of $\ell$ and $g$ respectively. 
We assume throughout the paper that $H^{\ell}$ and $H^{g}$ are psd matrices, this is a very minor assumption, and is satisfied by all the applications we consider.
The solution to this quadratic minimization may be obtained 
by solving the linear system
\begin{eqnarray}\label{ADMMSubProb}
		& (A^TH^{\ell}(A \tilde x^k ; b)A + H^g( \tilde x^k) + \rho I)x = r^k \\
		\mbox{where} & r^k = \rho \tilde z^k - \rho \tilde u^k + A^TH^{\ell}(A \tilde x^k ; b)A \tilde x^k \\ \nonumber
		& + H^g(\tilde x^k)\tilde x^k - A^T \nabla \ell(A\tilde x^k ; b) - \nabla g(\tilde x^k). \nonumber
\end{eqnarray}  
\normalsize
The inexact ADMM algorithm we propose solves \eqref{ADMMSubProb} approximately at each iteration. 
\begin{algorithm}[H]
	\centering
	\caption{Inexact ADMM}
	\label{fADMM}
	\begin{algorithmic}
	\INPUT{feature matrix $A$, response $b$, loss function $\ell$, regularization $g$ and $h$, stepsize $\rho$, positive summable sequence $\{\varepsilon^k\}_{k = 0}^{\infty}$}
		\REPEAT
		\STATE{find $\tilde x^{k+1}$ that solves \eqref{ADMMSubProb} within tolerance $\varepsilon^k$}
		\STATE{$\tilde z^{k+1} = \text{argmin}_z \{h(z) + \frac{\rho}{2}\|\tilde x^{k+1} - z + \tilde u^k\|^2_2\}$}
		\STATE{$\tilde u^{k+1} = \tilde u^k + \tilde x^{k+1} - \tilde z^{k+1} $}
		\UNTIL convergence
	\OUTPUT{solution $x_\star$ of problem \eqref{eq1}}
	\end{algorithmic}
\end{algorithm} 

For a quadratic loss $\ell$, when $\sum_{k = 0}^{\infty} \varepsilon^k < \infty$ and under various other conditions, if optimization problem \eqref{eq1} has an optimal solution, the $\{\tilde x^k\}_{k = 0}^{\infty}$ sequence generated by \Cref{fADMM} converges to the optimal solution of \eqref{eq1} \cite{eckstein1992douglas,eckstein2016approximate}. 

From \citet{MAL-016}, quantity $r_d^{k+1} = \rho (\tilde z^k - \tilde z^{k+1})$ can be regarded as the dual residual and $r_p^{k+1} = \tilde x^{k+1} - \tilde z^{k+1}$ can be viewed as the primal residual at iteration $k+1$. This suggests that we can terminate the ADMM iterations when the primal and dual residuals become very small. The primal and dual tolerances can be chosen based on an absolute and relative criterion, such as  
\begin{equation*}
    \begin{aligned}
        \|r_p^k\|_2 & \le \epsilon^{\text{abs}} + \epsilon^{\text{rel}} \text{max}\{\|\tilde x^k\|_2, \|\tilde z^k\|_2\} \\
        \|r_d^k\|_2 & \le \epsilon^{\text{abs}} + \epsilon^{\text{rel}} \|\rho \tilde u^k\|_2.
    \end{aligned}
\end{equation*}
The relative criteria $\epsilon^{\text{rel}}$ might be $10^{-3}$ or $10^{-4}$ in practice. The choice of absolute criteria $\epsilon^{\text{abs}}$ depends on the scale of the variable values. More details can be found in \citet{MAL-016}.

\subsection{Randomized Nystr{\"o}m approximation and PCG}
\label{subsection:nystrom}
Nystr{\"o}m approximation constructs a low-rank approximation of 
a symmetric psd matrix $H$.
Let $\Omega \in \mathbb{R}^{d\times s}$ be a test matrix 
(often, random Gaussian \citet{NIPS2017_4558dbb6,frangella2021randomized}) 
with sketch size $s\geq 1$.
The Nystr{\"o}m approximation with respect to $\Omega$ is given by
\begin{equation}
\label{eq:NysAppx}
    H\langle \Omega \rangle = (H\Omega)(\Omega^{T}H\Omega)^{\dagger}(H\Omega)^{T}.
\end{equation}
The Nystr{\"o}m approximation $H\langle \Omega \rangle$ is symmetric, psd, and has rank at most $s$ \cref{NysPropLemma}. 
Naive implementation of the Nystr{\"o}m approximation based on \eqref{eq:NysAppx} is numerically unstable. 
\Cref{Nyssketch} in \Cref{section:AlgNysAppxNysPCG} states a stable procedure to compute a randomized Nystr{\"o}m approximation from \citet{NIPS2017_4558dbb6}. 

\Cref{Nyssketch} returns the randomized Nystr{\"o}m approximation of matrix $H$ in the form of an eigendecomposition: $H_{\text{nys}} = U \hat \Lambda U^T$. Let $\hat \lambda_{s}$ be the $s$th eigenvalue. 
The randomized Nystr{\"o}m preconditioner and its inverse take the form 
\begin{equation}
\label{Nyspreconditioner}
\begin{aligned}
    P & = \frac{1}{\hat \lambda_{s} + \rho} U(\hat \Lambda + \rho I)U^T + (I - UU^T),\\
    P^{-1} & = (\hat \lambda_{s} + \rho) U(\hat \Lambda + \rho I)^{-1}U^T + (I - UU^T) \\
\end{aligned}
\end{equation}
\normalsize
\cite{frangella2021randomized}.
In a slight abuse of terminology, we sometimes refer to the sketch size $s$ as the rank of the Nystr{\"o}m preconditioner. 
We will use the the term sketch size and rank interchangeably throughout the paper. 
The Nystr{\"o}m preconditioner may be applied to vectors in $O(ds)$ time and only requires $O(ds)$ floating point numbers to store. 
The details of how to implement PCG with \eqref{Nyspreconditioner} are provided in \Cref{section:AlgNysAppxNysPCG} in \Cref{Nyspcg}.
We now provide some background on Nystr{\"o}m PCG and motivation for why we have paired it with ADMM.

Nystr{\"o}m PCG improves on standard CG both in theory and in practice
for matrices with a small \emph{effective dimension} \cite{frangella2021randomized},
which we now define.
Given a symmetric psd matrix $H\in \mathbb{R}^{d\times d}$ and regularization $\rho>0$, the \emph{effective dimension} of $H$ is
\begin{equation}
\label{eqn:deff-proof}
    \deff(\rho) = \textrm{tr}(H(H+\rho I)^{-1}).
\end{equation}
The effective dimension may be viewed as smoothed count of the eigenvalues of $H$ greater than or equal to $\rho$. 
We always have $d_{\textrm{eff}}(\rho)\leq d$, and we expect $d_{\textrm{eff}}(\rho)\ll d$ whenever $H$ exhibits spectral decay. 

In machine learning, most feature matrices naturally exhibit polynomial or exponential spectral decay \cite{DerezinskiLLM20}, thus we expect that $\deff \ll d$. 
The randomized Nystr{\"o}m preconditioner in \citet{frangella2021randomized} exploits the smallness of $d_{\textrm{eff}}(\rho)$ to build an highly effective preconditioner. 
\citet{frangella2021randomized} show that if \eqref{Nyspreconditioner} is constructed with a sketch size $s\gtrsim d_{\textrm{eff}}(\rho)$, then the condition number of the preconditioned system is constant with high probability. 
An immediate consequence is that PCG solves the preconditioned system to $\epsilon$-accuracy in $O\left(\log(\frac{1}{\epsilon})\right)$ iterations,
independent of the condition number of $H$.

Observe the Hessian $A^{T}H^{\ell}A+H^g$ in the inexact ADMM subproblem \eqref{ADMMSubProb} is formed from the feature matrix $A$. 
Based on the preceding discussion, we expect the Hessian to exhibit spectral decay and for the effective dimension to be small to moderate in size.
Hence we should expect Nystr{\"o}m PCG to accelerate the solution of \eqref{ADMMSubProb} significantly.

\subsection{NysADMM}
\label{subsection:NysADMM}
Integrating Nystr{\"o}m PCG with inexact ADMM, we obtain NysADMM, presented in \Cref{NysADMM}.
\begin{algorithm}[H]
	\centering
	\caption{NysADMM}
	\label{NysADMM}
	\begin{algorithmic}
	\INPUT{feature matrix $A$, response $b$, loss function $\ell$, regularization $g$ and $h$, stepsize $\rho$, positive summable sequence $\{\varepsilon^k\}_{k = 0}^{\infty}$}
	    \STATE{$[U, \hat \Lambda] = \text{RandNystr{\"o}mApprox}(A^T H^{\ell}A + H^g,s)$} \COMMENT{use \Cref{Nyssketch} in \Cref{section:AlgNysAppxNysPCG}}
		\REPEAT
		\STATE{use Nystr{\"o}m PCG (\Cref{Nyspcg} in \Cref{section:AlgNysAppxNysPCG}) to find $\tilde x^{k+1}$ that solves \eqref{ADMMSubProb} within tolerance $\varepsilon^k$}
		\STATE{$\tilde z^{k+1} = \text{argmin}_z \{h(z) + \frac{\rho}{2}\|\tilde x^{k+1} - z + \tilde u^k\|^2_2\}$}
		\STATE{$\tilde u^{k+1} = \tilde u^k + \tilde x^{k+1} - \tilde z^{k+1} $}
		\UNTIL convergence
	\OUTPUT{solution $x_\star$ of problem \eqref{eq1}}
	\end{algorithmic}
\end{algorithm} 
Our theory for \Cref{NysADMM}, shows that if the sketch size $s\gtrsim d_{\textrm{eff}}(\rho)$, then with high probability subproblem \eqref{ADMMSubProb} will be solved to $\epsilon$-accuracy in $O\left(\log(\frac{1}{\epsilon})\right)$ iterations (\Cref{corr:ADMMSubProb}).
When the loss $\ell$ is quadratic
and the sequence of tolerances $\{\varepsilon^k\}_{k = 0}^{\infty}$ is decreasing with $\sum_{k = 0}^{\infty} \varepsilon^k < \infty$,
NysADMM is guaranteed to converge as $k\rightarrow \infty$ with only a constant number of matvecs per iteration (\Cref{thm:NysADMMConv}).
\Cref{t-ComplexityComparison} compares the complexity of inexact ADMM with CG vs.~NysADMM for $K$ iterations under the hypotheses of \Cref{thm:NysADMMConv}.
NysADMM achieves a significant decrease in runtime over inexact ADMM with CG, as the iteration complexity no longer depends on the condition number $\kappa_2$.

\begin{table}[h]
	\begin{center}
		\footnotesize
		\caption{Complexity comparison, for a quadratic loss with Hessian $H$. Here $T_\text{mv}$ is the time to compute a matrix vector product with $H$, $\kappa_2$ is the condition number of $H$, and $\varepsilon^k$ is the precision of the $k$th subproblem solve \eqref{ADMMSubProb}.} 
		\label{t-ComplexityComparison}
		\begin{tabular}{|c|c|}
			\hline
			\textbf{Method} & \textbf{Complexity}\\
			\hline
			\makecell{Inexact ADMM \\ with CG}
			& $O\left(\sum^{K}_{k=1}T_\textrm{mv}\sqrt{\kappa_2}\log\left(\frac{\kappa_2}{\varepsilon^k}\right)\right)$ \\
			\hline
			NysADMM & \makecell{$O\left(T_{\textrm{mv}}d_{\textup{eff}}(\rho)\right)+$ \\ $\sum_{k=1}^K T_{\textrm{mv}}\left(4 + \bigg\lceil2\log\left(\frac{R}{\varepsilon^k \rho}\right) \bigg\rceil\right)$} \\
			\hline
		\end{tabular}
	\end{center}
\end{table}

\subsection{AdaNysADMM}
\label{section:AdaNysADMM}
Two practical problems remain in realizing the success predicted by the theoretical analysis of \Cref{t-ComplexityComparison}.
These bounds are achieved by selecting the sketch size to be $d_{\textrm{eff}}(\rho)$, but the effective dimension is
1) seldom known in practice, and 
2) often larger than required to achieve good convergence of NysADMM.
Fortunately, a simple adaptive strategy for choosing the sketch size,
inspired by \citet{frangella2021randomized},
can achieve the same guarantees as in \Cref{t-ComplexityComparison}. 
This strategy chooses a tolerance $\epsilon$ and doubles the sketch size $s$ until the empirical condition number $\frac{\hat{\lambda}_s+\rho}{\rho}$ satisfies
\begin{equation}
\label{eq:empCondNum}
    \frac{\hat{\lambda}_s+\rho}{\rho} \leq 1 + \epsilon.
\end{equation}
\Cref{thm:AdaNysADMM} guarantees that \eqref{eq:empCondNum}
 holds when $s \geq d_{\textrm{eff}}(\rho)$ and that when \eqref{eq:empCondNum} holds, the true condition number is on the order of $1+\epsilon$ with high probability. We refer to \eqref{eq:empCondNum} as the empirical condition number as it provides an estimate of the true condition number of the preconditoned system (\Cref{thm:AdaNysADMM}).

Thus, to enjoy the guarantees of \Cref{thm:AdaNysADMM} in practice, we may employ the adaptive version of NysADMM, which we call AdaNysADMM.
We provide the pseudocode for AdaNysADMM in \Cref{alg:AdaNysADMM} in \Cref{section:AlgNysAppxNysPCG}. Furthermore, as we use a Gaussian test matrix, it is possible to construct a larger sketch from a smaller one. 
Hence the total computational work needed by the adaptive strategy is not much larger than if the effective dimension were known in advance. 
Indeed, AdaNysADMM differs from NysADMM only in the construction of the preconditioner. The dominant cost in forming the precondition is computing the sketch is $H\Omega$, which costs $O(T_{\textrm{mv}}d_{\textrm{eff}}(\rho))$. As AdaNysADMM reuses computation, the dominant complexity for constructing the Nystr{\"o}m preconditioner remains $O(T_{\textrm{mv}}d_{\textrm{eff}}(\rho))$.
Consequently, the overall complexity of AdaNysADMM is the same as NysADMM in \Cref{t-ComplexityComparison}.

\section{Applications}
\label{section:apps}
Here we discuss various applications that can be reformulated as instances of \eqref{eq1} and solved by \Cref{NysADMM}.
\subsection{Elastic net}
\label{subsection:elnet}
Elastic net generalizes lasso and ridge regression by adding 
both the $\ell_1$ and $\ell_2$ penalty to the least squares problem:
\small
\begin{equation}\label{eq6}
	\text{minimize} \quad \frac{1}{2}\|Ax - b\|^2_2 + \frac{1}{2}(1 - \gamma)\|x\|^2_2 + \gamma \|x\|_1
\end{equation}
\normalsize
Parameter $\gamma>0$ interpolates between the $\ell_1$ and $\ell_2$ penalties. 
NysADMM applies with $\ell(Ax; b) = \frac{1}{2}\|Ax - b\|^2_2$, $g(x) = \frac{1}{2}(1 - \gamma)\|x\|^2_2$, and $h(x) = \gamma \|x\|_1$. 
The Hessian matrices for $\ell$ and $g$ are $A^TA$ and $(1 - \gamma)I$ respectively. 

\subsection{Regularized logistic regression}
\label{subsection:logisticreg}
Regularized logistic regression minimizes a logistic loss function together with an $\ell_1$ regularizer:
\small
\begin{equation}\label{eq9}
	\text{minimize} \ - \sum_i{(b_i (A x)_i - \text{log}(1 + \text{exp}((Ax)_i)))} +  \gamma \|x\|_1
\end{equation}
\normalsize
NysADMM applies with $\ell(Ax; b) = - \sum_i{(b_i (A x)_i - \text{log}(1 + \text{exp}((Ax)_i)))}$ and 
$h(x) = \gamma \|x\|_1$. 
The inexact ADMM update chooses $x^{k+1}$ to minimize a quadratic approximation of the log-likelihood,
\small
\begin{equation*}
	\text{minimize} \ \frac{1}{2}\sum_i{w^k_i(q^k_i - (A x)_i)^2} +  \frac{\rho}{2}\|x - \tilde z^k + \tilde u^k\|^2_2,
\end{equation*}
\normalsize
where $w^k_i$ and $q^k_i$ depend on the current estimate $\tilde x^k$ as 
\small
\begin{equation*}
	\begin{aligned}
		w^k_i = & \frac{1}{2+\text{exp}(-(A \tilde x^k)_i) +\text{exp}((A \tilde x^k)_i)} \\
		q^k_i = & (A \tilde x^k)_i + \frac{b_i - \frac{1}{1+\text{exp}(-(A \tilde x^k)_i)}}{w^k_i}.
	\end{aligned}
\end{equation*}
\normalsize
Therefore, the solution of the $x$-subproblem can be approximated by solving the linear system 
\small
\begin{equation*}
	(A^T\text{diag}(w^k)A + \rho I)x = \rho \tilde z^k - \rho \tilde u^k + A^T \text{diag}(w^k)q^k. 
\end{equation*}
\normalsize
Here $w^k$ and $q^k$ are the vectors for $w^k_i$ and $q^k_i$. 
The Hessian matrix of $\ell$ is given by $A^T\text{diag}(w^k)A$. 

\subsection{Support vector machine}
\label{subsection:svm}
To reformulate the SVM problem for solution with NysADMM, consider the dual SVM problem
\small
\begin{equation}\label{eq8}
	\begin{aligned}
		\text{minimize} \quad &\frac{1}{2} x^T \text{diag}(b) K \text{diag}(b) x - \mathbf{1}^T x \\
		\text{subject to} \quad &x ^T b = 0 \\
		& 0 \le x \le C. \\
	\end{aligned}
\end{equation}
\normalsize
Variable $x$ is the dual variable, 
$b$ is the label or response, and $C$ is the penalty parameter for misclassification. 
For linear SVM, $K = A^TA$ where $A$ is a feature matrix; and for nonlinear SVM, $K$ is the corresponding kernel matrix.  
The SVM problem can be reformulated as \eqref{eq1} by setting $\ell(Ax; b) = \frac{1}{2} x^T \text{diag}(b) K \text{diag}(b) x$, 
$g(x) = -\mathbf{1}^Tx$, and $h$ is the indicator function for convex constraint set $x ^T b = 0, \ 0 \le x \le C$. 
The Hessian matrix for $\ell$ is $\text{diag}(b) K \text{diag}(b)$. 
 
\section{Convergence analysis}
\label{section:Convergence}
This section provides a convergence analysis for NysADMM. All proofs for the results in this section may be found in \Cref{section:proofs}.
First we show Nystr{\"o}m PCG can solve any quadratic problem in a constant number of iterations. 
\begin{theorem}
\label{thm:NysCondNum}
Let $H$ be a symmetric positive semidefinite matrix, $\rho > 0$ and set $H_{\rho} = H+\rho I$. Suppose we construct the randomized Nystr{\"o}m preconditioner with sketch size $s\geq 8\left(\sqrt{d_{\textup{eff}}(\rho)}+\sqrt{8\log(\frac{16}{\delta}})\right)^{2}$. Then
\begin{equation}
    \kappa_{2}(P^{-1/2}H_{\rho}P^{-1/2}) \leq 8
\end{equation}
with probability at least $1-\delta$.
\end{theorem}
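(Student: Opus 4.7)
The plan is to follow the standard two-step recipe for analyzing randomized Nystr{\"o}m preconditioners: first a deterministic reduction that bounds the preconditioned condition number in terms of the Nystr{\"o}m approximation error $E = H - H_{\textrm{nys}}$, and then a probabilistic bound on $\|E\|$ that holds when the sketch size is sufficiently large relative to $d_{\textup{eff}}(\rho)$.

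For the deterministic step, I would first use the standard fact (provable from the representation $H\langle \Omega\rangle = (H\Omega)(\Omega^T H\Omega)^\dagger(H\Omega)^T$) that $0 \preceq H_{\textrm{nys}} \preceq H$, so $E \succeq 0$ and $H_\rho = (H_{\textrm{nys}} + \rho I) + E$. From the explicit form of $P$ in \eqref{Nyspreconditioner}, one verifies the spectral identity
\begin{equation*}
    P = \tfrac{1}{\hat\lambda_s+\rho}\bigl(H_{\textrm{nys}}+\rho I\bigr) + \bigl(1 - \tfrac{\rho}{\hat\lambda_s+\rho}\bigr)(I-UU^T),
\end{equation*}
which yields $\tfrac{\rho}{\hat\lambda_s+\rho}\, I \preceq P$ and therefore $\|P^{-1/2} E P^{-1/2}\| \le \tfrac{\hat\lambda_s+\rho}{\rho}\|E\|$. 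A short calculation using the same identity shows that $P^{-1/2}(H_{\textrm{nys}}+\rho I)P^{-1/2}$ has spectrum contained in $[\rho,\hat\lambda_s+\rho]$ after normalization, and combining the two gives a clean bound of the form
\begin{equation*}
    \kappa_2\bigl(P^{-1/2} H_\rho P^{-1/2}\bigr) \;\le\; 1 + \tfrac{2}{\rho}\|E\|.
\end{equation*}
Thus to obtain $\kappa_2 \le 8$ it suffices to show $\|E\| \le \tfrac{7}{2}\rho$; in fact I would aim for the easier target $\|E\| \le \rho$ to leave slack.

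For the probabilistic step, I would invoke the standard Halko--Martinsson--Tropp-style analysis for a Gaussian test matrix, adapted to the Nystr{\"o}m setting as in \citet{frangella2021randomized}. Splitting the spectrum of $H$ at an index $r$ and applying a Chevet/Gordon-type bound together with Davidson--Szarek concentration for the extreme singular values of a Gaussian matrix, one obtains a high-probability bound of the form
\begin{equation*}
    \|E\| \;\lesssim\; \lambda_{r+1}(H) + \frac{\sum_{i>r}\lambda_i(H)}{s - r - c\log(1/\delta)}.
\end{equation*}
Choosing $r$ to be the smallest index with $\lambda_{r+1}(H) \le \rho$ (so $r \le d_{\textup{eff}}(\rho)$) and using the definition $d_{\textup{eff}}(\rho) = \textrm{tr}(H(H+\rho I)^{-1})$ to bound the residual trace by $2 d_{\textup{eff}}(\rho)\rho$, the hypothesized sketch size $s \ge 8(\sqrt{d_{\textup{eff}}(\rho)}+\sqrt{8\log(16/\delta)})^{2}$ forces the right-hand side below $\rho$ with probability at least $1-\delta$, completing the proof.

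The main obstacle is bookkeeping constants in the probabilistic step so that the bound collapses exactly to the advertised form $8(\sqrt{d_{\textup{eff}}(\rho)}+\sqrt{8\log(16/\delta)})^2$. The deterministic reduction is essentially algebraic manipulation of the rank-$s$ eigendecomposition and is straightforward; the delicate part is the choice of the cutoff $r$ and the simultaneous use of (i) a tail bound for the largest singular value of a Gaussian block and (ii) an expectation bound on the trace of a pseudoinverse of a Wishart matrix, both of which must be tuned so the $\sqrt{d_{\textup{eff}}(\rho)}$ and $\sqrt{\log(1/\delta)}$ terms combine additively inside a square, matching the form stated in the theorem.
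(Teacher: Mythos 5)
There is a genuine gap in your deterministic reduction. The inequality $\kappa_2(P^{-1/2}H_\rho P^{-1/2}) \le 1 + \tfrac{2}{\rho}\|E\|$ cannot be correct: it drops the dependence on $\hat\lambda_s$ that your own intermediate computation exposes. Indeed, if $H$ has rank exactly $s$ and the Nystr{\"o}m approximation is exact, then $E=0$ but $P^{-1/2}H_\rho P^{-1/2} = (\hat\lambda_s+\rho)UU^T + \rho(I-UU^T)$ has condition number $(\hat\lambda_s+\rho)/\rho$, which can be arbitrarily large, whereas your bound would give $\kappa_2 \le 1$. The correct deterministic statement (\cref{NysPCGThm}) is $\kappa_2(P^{-1/2}H_\rho P^{-1/2}) \le (\hat\lambda_s + \rho + \|E\|)/\rho$, and closing the argument requires a \emph{second} ingredient that your proposal omits entirely: a bound $\hat\lambda_s \lesssim \rho$. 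The paper obtains this from $\hat\lambda_s \le \lambda_s(H)$ together with the elementary fact that $j \ge (1+\gamma^{-1})d_{\textup{eff}}(\rho)$ implies $\lambda_j(H) \le \gamma\rho$ (\cref{KeyLemma}); with $s \ge 8\, d_{\textup{eff}}(\rho)$ this gives $\hat\lambda_s \le \rho/7$. Without some such step, no bound on $\|E\|$ alone can yield $\kappa_2 \le 8$.

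The probabilistic step also diverges from the paper in a way that matters. You propose the Halko--Martinsson--Tropp route (spectrum splitting at a cutoff $r$, Chevet/Gordon plus Davidson--Szarek), but this is precisely the approach the paper abandons: it yields sharp expectation bounds, and the high-probability versions it produces do not come out in the additive form $\bigl(\sqrt{d_{\textup{eff}}(\rho)}+\sqrt{8\log(16/\delta)}\bigr)^{2}$ with exponentially small failure probability. The paper instead passes to the regularized Nystr{\"o}m approximation $H\langle\Omega\rangle_\sigma$, controls $E \preceq E_\sigma \preceq \tfrac{\sigma}{1-\eta}I$ via a Schur-complement argument (\cref{RegNysLemma}), and bounds $\eta$ using an approximate-matrix-multiplication result stated in terms of the effective dimension; this yields $\|E\| \le 6\rho$ with probability $1-\delta$ at the stated sketch size (\cref{lemma:NysErrLemma}). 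Note also that your ``easier target'' $\|E\| \le \rho$ is in fact \emph{harder}: at sketch size $\approx d_{\textup{eff}}(\rho)$ the achievable guarantee is $\|E\| \le 6\rho$ (forcing $\|E\| \le \rho$ would require $s \gtrsim d_{\textup{eff}}(\rho/6)$), and the final constant $8 = 1 + 6 + 1/7$ is assembled with essentially no slack. So both the missing control of $\hat\lambda_s$ and the slack you budget for $\|E\|$ would prevent the proof from closing as written.
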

\Cref{thm:NysCondNum} strengthens results in \citet{frangella2021randomized}, which provides sharp expectation bounds on the condition number of the preconditioned system, but gives loose high probability bounds based on Markov's inequality. 
Our result tightens these bounds, showing that Nystr{\"o}m PCG enjoys an exponentially small failure probability.

As an immediate corollary, we can solve \eqref{ADMMSubProb} with a few iterations of PCG using the Nystr{\"o}m preconditioner.
\begin{corollary}
\label{corr:ADMMSubProb}
Instate the hypotheses of \Cref{thm:NysCondNum} and let $\tilde{x}_{\star}$ denote the solution of \eqref{ADMMSubProb}. 
Then with probability at least $1-\delta$, the iterates $\{x_t\}_{t\geq 1}$ produced by Nystr{\"o}m PCG on problem \eqref{ADMMSubProb} satisfy
\begin{equation}
    \frac{\|x_t-\tilde{x}_\star\|_2}{\|\tilde{x}_{\star}\|_2}\leq \left(\frac{1}{2}\right)^{t-4}.
\end{equation}
Thus, after $t\geq \bigg\lceil\frac{\log\left(\frac{16\|\tilde{x}_\star\|_2}{\epsilon}\right)}{\log(2)} \bigg\rceil$ iterations,
\begin{equation}
    \|x_t-\tilde{x}_\star\|_2\leq \epsilon.
\end{equation}
\end{corollary}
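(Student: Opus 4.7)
The plan is to apply the classical preconditioned CG convergence theorem to the linear system \eqref{ADMMSubProb} using the condition number estimate from \Cref{thm:NysCondNum}, and then translate the resulting energy-norm bound into a Euclidean-norm bound.

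I first condition on the high-probability event of \Cref{thm:NysCondNum}, so that the preconditioned operator satisfies $\kappa \leq 8$, where $\kappa=\kappa_{2}(P^{-1/2}H_{\rho}P^{-1/2})$ and $H_{\rho}=A^{T}H^{\ell}A+H^{g}+\rho I$ is the matrix of \eqref{ADMMSubProb}. Initializing PCG at $x_{0}=0$, the standard Chebyshev polynomial analysis of PCG yields
\begin{equation*}
\|x_{t}-\tilde{x}_{\star}\|_{H_{\rho}} \leq 2\left(\frac{\sqrt{\kappa}-1}{\sqrt{\kappa}+1}\right)^{t}\|\tilde{x}_{\star}\|_{H_{\rho}}.
\end{equation*}
Since $\kappa\leq 8$, a direct check gives $(\sqrt{\kappa}-1)/(\sqrt{\kappa}+1)\leq 1/2$, so the contraction factor in the energy norm is at most $2\cdot 2^{-t}$; this is the only place the high-probability condition number bound is used.

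Next I would pass from the $H_{\rho}$-norm to the Euclidean norm by sandwiching. Since $H_{\rho}\succeq \rho I$, one has $\|x_{t}-\tilde{x}_{\star}\|_{2}\leq \rho^{-1/2}\|x_{t}-\tilde{x}_{\star}\|_{H_{\rho}}$; and $\|\tilde{x}_{\star}\|_{H_{\rho}}\leq \sqrt{\lambda_{\max}(H_{\rho})}\,\|\tilde{x}_{\star}\|_{2}$. Chaining these with the preceding energy-norm bound yields
\begin{equation*}
\frac{\|x_{t}-\tilde{x}_{\star}\|_{2}}{\|\tilde{x}_{\star}\|_{2}} \leq 2\sqrt{\kappa_{2}(H_{\rho})}\cdot 2^{-t}.
\end{equation*}
The step I expect to be the main obstacle is cleanly absorbing the prefactor $2\sqrt{\kappa_{2}(H_{\rho})}$ into the constant $2^{4}=16$ claimed by the statement. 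Achieving this appears to require additional structure of the \nystrom{} preconditioner \eqref{Nyspreconditioner}, in particular the bound $P\succeq I$ that gives $P^{-1/2}H_{\rho}P^{-1/2}\preceq H_{\rho}$, or an absorption of any excess into a few ``burn-in'' iterations---which is precisely the role played by the shift of $4$ in the exponent of $(1/2)^{t-4}$.

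Finally, the second inequality follows by inverting the first: requiring $2^{4-t}\|\tilde{x}_{\star}\|_{2}\leq \epsilon$ is equivalent to $t\geq \log_{2}(16\|\tilde{x}_{\star}\|_{2}/\epsilon)$, which matches the stated ceiling after rounding up. The failure probability $\delta$ carries over directly from \Cref{thm:NysCondNum}, so no additional union bound is required.
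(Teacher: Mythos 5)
Your overall strategy---condition on the event of \Cref{thm:NysCondNum}, invoke the Chebyshev convergence bound for (P)CG, convert the energy-norm estimate to a Euclidean one, and fold the constants into the shift of $4$---is the same as the paper's. But there is a genuine gap at exactly the step you flag as the ``main obstacle,'' and your proposed fixes do not close it. By measuring the error in the $H_{\rho}$-norm and sandwiching via $\rho I\preceq H_{\rho}\preceq \lambda_{\max}(H_{\rho})I$, you pick up the prefactor $\sqrt{\kappa_{2}(H_{\rho})}$, the condition number of the \emph{unpreconditioned} matrix. This is precisely the large, uncontrolled quantity the preconditioner exists to circumvent; nothing in the hypotheses bounds it, so it cannot be absorbed into $2^{4}=16$, and absorbing it into ``burn-in'' iterations would cost an extra $\tfrac{1}{2}\log_{2}\kappa_{2}(H_{\rho})$ iterations, which destroys the condition-number-free iteration count that is the whole point of the corollary. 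The observation $P\succeq I$ does not rescue the argument either, since it gives no useful lower bound on $\lambda_{\min}(H_{\rho})$.

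The paper avoids this by never leaving the preconditioned space: it states the CG bound in the norm $\|\cdot\|_{A}$ induced by $A=P^{-1/2}H_{\rho}P^{-1/2}$ itself, then converts to the Euclidean norm using $\lambda_{d}(A)\|x\|_{2}\leq\|x\|_{A}\leq\lambda_{1}(A)\|x\|_{2}$, so the conversion factor is $\kappa_{2}(A)\leq 8$ rather than $\sqrt{\kappa_2(H_\rho)}$. Combined with the Chebyshev prefactor $2$ and the observation that $(\sqrt{8}-1)/(\sqrt{8}+1)\leq 1/2$, this gives $\kappa_2(A)\cdot 2\cdot(1/2)^{t}\le (1/2)^{t-4}$, which is where the exponent shift of $4$ actually comes from. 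Your final step (solving $(1/2)^{t-4}\|\tilde{x}_{\star}\|_{2}\leq\epsilon$ for $t$ and carrying over the failure probability with no union bound) is correct and matches the paper.
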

\Cref{corr:ADMMSubProb} ensures that we can efficiently solve the sub-problem to the necessary accuracy at each iteration. This result allows us to prove convergence of NysADMM.
\begin{theorem}
\label{thm:NysADMMConv}
Consider the problem in \eqref{eq1} with quadratic loss $\ell(Ax; b) = \frac{1}{2}\|Ax-b\|_2^{2}$ and the smooth part $g$ of regularizer $r$ has constant Hessian. Define initial iterates $\tilde x^0$, $\tilde z^0$ and $\tilde u^0\in \mathbb{R}^{d}$, stepsize $\rho>0$, and summable tolerance sequence $\{\varepsilon^k\}^{\infty}_{k=0}\subset \mathbb{R}_{+}$.
Assume at $k$th ADMM iteration, the norm of the righthand side of the linear system $r^k$ is bounded by constant $R$ for all $k$.
Construct the Nystr{\"o}m preconditioner with sketch size 
\[
s\geq 8\left(\sqrt{d_{\textup{eff}}(\rho)}+\sqrt{8\log\left(\frac{16}{\delta}\right)}\right)^{2}
\]
and solve problem \eqref{eq1} with NysADMM, using 
$T^{k} = 4 + \bigg\lceil2\log\left(\frac{R}{\varepsilon^k \rho}\right) \bigg\rceil$ iterations for PCG at the $k$th ADMM iteration. 
Then with probability at least $1-\delta$,
\begin{enumerate}
    \item For all $k\geq 0$, each iterate $\tilde x^{k+1}$ satisfies 
    \begin{equation}
        \|\tilde x^{k+1}-x^{k+1}\|_2\leq \varepsilon^k,
    \end{equation}
    where $x^{k+1}$ is the exact solution of \eqref{ADMMSubProb}.
    \item As $k\rightarrow \infty$, $\{\tilde x^k\}^{\infty}_{k=0}$ converges to a solution of the primal \eqref{eq1} and $\{\rho \tilde u^k\}^{\infty}_{k=0}$ converges to a solution of the dual problem of \eqref{eq1}.
\end{enumerate}
\end{theorem}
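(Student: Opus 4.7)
The plan is to decompose the argument into two parts matching the two claims of the theorem. The crucial observation I would exploit up front is that, because $\ell$ is quadratic and $g$ has constant Hessian, the subproblem matrix $A^{T}H^{\ell}A + H^{g} + \rho I$ is the \emph{same} at every ADMM iteration. Hence the randomized Nyström preconditioner built once before entering the loop is valid for every inner solve, and the failure probability $\delta$ in \Cref{thm:NysCondNum} is paid only once; conditioning on that single good event, \Cref{corr:ADMMSubProb} may be invoked at every outer iteration without a union bound.

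For claim (1), I would first bound the exact subproblem solution $x^{k+1} = (A^{T}H^{\ell}A + H^{g} + \rho I)^{-1} r^{k}$ in norm. Since $A^{T}H^{\ell}A$ and $H^{g}$ are psd, the subproblem matrix is bounded below by $\rho I$, so its inverse has spectral norm at most $1/\rho$; combined with the hypothesis $\|r^{k}\|_{2}\leq R$, this yields $\|x^{k+1}\|_{2}\leq R/\rho$. Substituting this bound into \Cref{corr:ADMMSubProb} with target accuracy $\varepsilon^{k}$, the corollary guarantees $\|\tilde x^{k+1}-x^{k+1}\|_{2}\leq \varepsilon^{k}$ whenever the number of inner PCG iterations is at least $\lceil \log_{2}(16R/(\rho\varepsilon^{k}))\rceil = 4 + \lceil \log_{2}(R/(\rho\varepsilon^{k}))\rceil$. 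Since $2\log(\cdot) \geq \log_{2}(\cdot)$, the prescribed count $T^{k} = 4 + \lceil 2\log(R/(\varepsilon^{k}\rho))\rceil$ suffices, proving claim (1).

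For claim (2), I would invoke the classical inexact-ADMM convergence theory of \citet{eckstein1992douglas} (see also \citet{eckstein2016approximate}). Its hypotheses are: existence of a primal-dual solution (assumed), exact evaluation of the $z$- and $u$-updates (true by construction, since the soft-threshold and dual-ascent steps are carried out in closed form), and a summable sequence of subproblem errors. The last hypothesis follows by combining claim (1) with the summability of $\{\varepsilon^{k}\}$. Applying the theorem then delivers simultaneously that $\{\tilde x^{k}\}$ converges to a primal minimizer of \eqref{eq1} and $\{\rho\tilde u^{k}\}$ converges to a dual solution.

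The main obstacle I anticipate is a bookkeeping issue rather than a conceptual one: the inexact-ADMM convergence theorems of \citet{eckstein1992douglas,eckstein2016approximate} measure subproblem error via the residual of the $x$-update's optimality condition, not via the iterate error $\|\tilde x^{k+1}-x^{k+1}\|_{2}$ that our PCG stopping criterion naturally produces. I would bridge these two notions by exploiting the fact that, for a quadratic $x$-subproblem, the optimality residual equals $(A^{T}H^{\ell}A + H^{g} + \rho I)(\tilde x^{k+1}-x^{k+1})$, whose norm is at most $\|A^{T}H^{\ell}A + H^{g} + \rho I\|\cdot \varepsilon^{k}$. Since the operator norm of the subproblem matrix is a fixed constant in this setting, summability of $\{\varepsilon^{k}\}$ transfers directly to summability of the residuals required by the Eckstein--Bertsekas framework, closing the argument.
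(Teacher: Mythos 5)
Your proposal is correct and follows essentially the same route as the paper: bound $\|x^{k+1}\|_2 \le \|r^k\|_2/\rho \le R/\rho$ using the $\rho I$ lower bound on the subproblem matrix, feed this into \Cref{corr:ADMMSubProb} to obtain the prescribed iteration count $T^k$ (exploiting that the subproblem matrix is constant so the preconditioner is built once and the failure probability is paid once), and invoke the Eckstein--Bertsekas inexact-ADMM theorem (with $M=I$, $\nu^k=0$, $\lambda^k=1$) together with summability of $\{\varepsilon^k\}$ for convergence. The obstacle you anticipate at the end is not actually present: the Eckstein--Bertsekas criterion, as stated and used in the paper, measures the subproblem error as the distance $\|\tilde x^{k+1}-x^{k+1}\|_2$ to the exact minimizer rather than as an optimality residual, so your bridging step is harmless but unnecessary.
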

\Cref{thm:NysADMMConv} establishes convergence of NysADMM for a quadratic loss.
The quadratic loss already covers many applications of interest including the lasso, elastic-net, and SVMs. 
We conjecture that a modification of our argument can show that NysADMM converges linearly for any strongly convex loss, but we leave this extension to future work.

The next result makes rigorous the claims made in \Cref{section:AdaNysADMM}: it shows we can determine whether or not we have reached the effective dimension by monitoring the empirical condition number $(\hat{\lambda}_s+\rho)/\rho$.
\begin{theorem}
\label{thm:AdaNysADMM}
Suppose, for some user defined tolerance $\epsilon>0$, 
the sketch size satisfies 
\small
\[
s \geq 8\left(\sqrt{d_{\textup{eff}}\left(\frac{\epsilon\rho}{6}\right)}+\sqrt{8\log\left(\frac{16}{\delta}\right)}\right)^{2}.
\]
\normalsize
Then the empirical condition number of the Nystr{\"o}m preconditioned system $P^{-1/2}H_r P^{-1/2}$ satisfies
\begin{equation}
\frac{\hat{\lambda}_{s}+\rho}{\rho} \leq 1+\frac{\epsilon}{42}.
\end{equation}

Furthermore, with probability at least $1-\delta$,
\begin{equation}
\left|\kappa_2(P^{-1/2}H_\rho P^{-1/2})-\frac{\hat{\lambda}_{s}+\rho}{\rho}\right|\leq \epsilon.
\end{equation}
\end{theorem}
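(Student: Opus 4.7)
The plan is to handle the two claims separately. The deterministic first claim relies only on properties of $H$ and the sketch size, while the probabilistic second claim uses the spectral-norm Nystr{\"o}m approximation bound underlying \Cref{thm:NysCondNum}: for $s\gtrsim 8\bigl(\sqrt{\deff(\mu)}+\sqrt{8\log(16/\delta)}\bigr)^{2}$, one has $\|H-H_{\textup{nys}}\|\leq C\mu$ with probability at least $1-\delta$. I will apply this with $\mu=\epsilon\rho/6$, which is exactly the setting chosen in the hypothesis.

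For the first claim, since $H_{\textup{nys}}\preceq H$ by construction of the Nystr{\"o}m approximation, we have $\hat{\lambda}_{s}\leq \lambda_{s}(H)$. A Markov-type argument on the effective dimension then shows that $\lambda_{s}(H)$ cannot much exceed $\mu$ once $s$ is a sufficiently large constant multiple of $\deff(\mu)$, since the number of eigenvalues of $H$ that exceed $\mu$ is at most $2\deff(\mu)$. Tracking constants through the oversampling factor $8$ in the hypothesis should yield $\hat{\lambda}_{s}\leq \epsilon\rho/42$, which is exactly the stated bound $(\hat{\lambda}_s+\rho)/\rho\leq 1+\epsilon/42$.

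For the second claim, I will exploit the explicit form of $P$ in \eqref{Nyspreconditioner}. A direct calculation shows that $P^{-1}(H_{\textup{nys}}+\rho I)$ acts as $(\hat{\lambda}_s+\rho)I$ on $\textrm{range}(U)$ and as $\rho I$ on its orthogonal complement, so its condition number is exactly the empirical value $(\hat{\lambda}_{s}+\rho)/\rho$. Writing $H_\rho=(H_{\textup{nys}}+\rho I)+E$ with $E=H-H_{\textup{nys}}\succeq 0$ and observing that $P\succeq I$ (so $\|P^{-1/2}EP^{-1/2}\|\leq \|E\|$), Weyl's inequality bounds $\lambda_{\max}$ and $\lambda_{\min}$ of $P^{-1/2}H_\rho P^{-1/2}$ within $\|E\|$ of those of $P^{-1/2}(H_{\textup{nys}}+\rho I)P^{-1/2}$. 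Plugging in the probabilistic bound $\|E\|\leq \epsilon\rho/6$ and simplifying the resulting rational expression in $\hat{\lambda}_s$, $\rho$, and $\|E\|$ yields $|\kappa_2-(\hat{\lambda}_s+\rho)/\rho|\leq \epsilon$.

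The main obstacle is the constant tracking in the first claim: the Markov-type step that bounds $\lambda_s(H)$ in terms of $\mu$ is sharp only up to a small multiplicative constant, so matching the precise numerical target $1+\epsilon/42$ rather than a looser $1+O(\epsilon)$ requires threading the effective dimension at the chosen $\mu=\epsilon\rho/6$ with care, and in particular using the full strength of the oversampling rather than the naive pigeonhole argument.
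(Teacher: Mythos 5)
Your proposal is correct and follows essentially the same route as the paper: the first claim is exactly the paper's argument ($\hat{\lambda}_s\leq\lambda_s$ by item 4 of \Cref{NysPropLemma}, then the effective-dimension Markov bound of \Cref{KeyLemma} with $\gamma=1/7$ gives $\lambda_s\leq\tfrac{1}{7}\cdot\tfrac{\epsilon\rho}{6}=\tfrac{\epsilon\rho}{42}$), and your Weyl-based treatment of the second claim is an inline re-derivation of the deterministic bound $\kappa_2\leq(\hat{\lambda}_s+\rho+\|E\|)/\rho$ that the paper invokes as \Cref{NysPCGThm}, combined with the same high-probability control of $\|E\|$. Two details deserve care. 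First, your intermediate constant is too optimistic: applying \Cref{lemma:NysErrLemma} at regularization level $\epsilon\rho/6$ yields $\|E\|\leq 6\cdot\tfrac{\epsilon\rho}{6}=\epsilon\rho$, not $\|E\|\leq\epsilon\rho/6$; the factor of $6$ in the hypothesis is engineered precisely so that the resulting bound $\|E\|/\rho\leq\epsilon$ lands on the target with no slack, so your conclusion survives but only with the correct constant threaded through. Second, for the lower deviation the paper simply uses $\kappa_2\geq 1$ together with $(\hat{\lambda}_s+\rho)/\rho\leq 1+\epsilon/42$ to get a deficit of at most $\epsilon/42$; your alternative $\kappa_2\geq(\hat{\lambda}_s+\rho)/(\rho+\|E\|)$ also works, but with $\|E\|\leq\epsilon\rho$ the resulting deficit is $(1+\epsilon/42)\tfrac{\epsilon}{1+\epsilon}$, which is at most $\epsilon$ only after the additional observation that $\epsilon/42\leq\epsilon$ --- so the ``simplification of the rational expression'' you defer is not entirely automatic, and the paper's route is the shorter one.
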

\Cref{thm:AdaNysADMM} shows that once the empirical condition number is sufficiently close to $1$, so too is the condition number of the preconditioned system. 
Hence it is possible to reach the effective dimension by doubling the sketch size of the Nystr{\"o}m approximation until the empirical condition number falls below the desired tolerance. 
\Cref{thm:AdaNysADMM} ensures the true condition number is close to this empirical estimate with high probability. 

\Cref{thm:AdaNysADMM} also helps explain why sketch sizes much smaller than the effective dimension can succeed in practice. 
The point is best illustrated by instantiating an explicit parameter selection in \Cref{thm:AdaNysADMM}, which yields the following corollary. 
\begin{corollary}
\label{corr:SmallRanks}
Instate the hypotheses of \Cref{thm:AdaNysADMM} with $\epsilon = 100$. Then with a sketch size of $s\gtrsim d_{\textup{eff}}(16\rho)$ the following holds
\begin{enumerate}
    \item $(\hat{\lambda}_{s}+\rho)/\rho \leq 1+\frac{100}{42}.$
    \item With probability at least $1-\delta$, 
    \[\bigg|\kappa_{2}(P^{-1/2}H_{\rho}P^{-1/2})-1-\frac{100}{42}\bigg|\leq 100.\]
\end{enumerate}
\end{corollary}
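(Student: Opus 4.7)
The plan is to derive \Cref{corr:SmallRanks} as a direct specialization of \Cref{thm:AdaNysADMM}, so the proposal is essentially a short unpacking exercise rather than a new argument. The only substantive step is to show that the sketch size bound in \Cref{thm:AdaNysADMM}, with $\epsilon = 100$, can be absorbed into the cleaner sufficient condition $s \gtrsim d_{\textup{eff}}(16\rho)$ appearing in the corollary.

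First, I would set $\epsilon = 100$ in the hypothesis of \Cref{thm:AdaNysADMM}. The required sketch size then becomes
\[
s \geq 8\Bigl(\sqrt{d_{\textup{eff}}(100\rho/6)} + \sqrt{8\log(16/\delta)}\Bigr)^{2}.
\]
Because $d_{\textup{eff}}(\mu) = \textup{tr}(H(H+\mu I)^{-1})$ is nonincreasing in $\mu$, and $100/6 > 16$, we have $d_{\textup{eff}}(100\rho/6) \leq d_{\textup{eff}}(16\rho)$. Thus any $s$ satisfying $s \gtrsim d_{\textup{eff}}(16\rho)$ (with an appropriate absolute constant swallowed into $\gtrsim$, together with the additive $\sqrt{8\log(16/\delta)}$ term) verifies the sketch-size hypothesis of \Cref{thm:AdaNysADMM}.

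Once the hypothesis holds, I would simply read off the two conclusions of \Cref{thm:AdaNysADMM} at $\epsilon = 100$. The first conclusion becomes $(\hat{\lambda}_s + \rho)/\rho \leq 1 + 100/42$, which is statement (1) verbatim. The second conclusion reads, with probability at least $1-\delta$,
\[
\Bigl|\kappa_{2}(P^{-1/2}H_\rho P^{-1/2}) - \frac{\hat{\lambda}_s + \rho}{\rho}\Bigr| \leq 100,
\]
and combining this with the bound $(\hat{\lambda}_s + \rho)/\rho \leq 1 + 100/42$ from (1) by the triangle inequality yields statement (2).

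There is no real obstacle here: the corollary is a corollary in the strict sense. The only point requiring any thought is the monotonicity comparison $d_{\textup{eff}}(100\rho/6) \leq d_{\textup{eff}}(16\rho)$, and the convention that $\gtrsim$ hides the constant factor $8$ as well as the additive $\log(1/\delta)$ term in the original bound.
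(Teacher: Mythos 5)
Your overall route is exactly the one the paper intends: the paper gives no separate proof of \Cref{corr:SmallRanks} and presents it as a direct instantiation of \Cref{thm:AdaNysADMM} at $\epsilon=100$, with the sketch-size hypothesis simplified via the monotonicity $d_{\textup{eff}}(100\rho/6)\leq d_{\textup{eff}}(16\rho)$ (which holds since $\deff(\mu)=\sum_i \lambda_i/(\lambda_i+\mu)$ is nonincreasing in $\mu$ and $100/6>16$). Your handling of the $\gtrsim$ convention matches the paper's own level of informality.

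The one step that does not quite close as written is statement (2). The triangle inequality gives
$\bigl|\kappa_{2}(P^{-1/2}H_{\rho}P^{-1/2})-1-\tfrac{100}{42}\bigr|\leq \bigl|\kappa_{2}-\tfrac{\hat{\lambda}_s+\rho}{\rho}\bigr|+\bigl|\tfrac{\hat{\lambda}_s+\rho}{\rho}-1-\tfrac{100}{42}\bigr|\leq 100+\tfrac{100}{42}$,
which overshoots the claimed bound of $100$. To get $100$ exactly you should use the one-sided estimates underlying the theorem's proof rather than the two absolute-value bounds: the upper bound $\kappa_{2}\leq \tfrac{\hat{\lambda}_s+\rho}{\rho}+\epsilon\leq 1+\tfrac{\epsilon}{42}+\epsilon$ gives $\kappa_{2}-1-\tfrac{\epsilon}{42}\leq\epsilon$, while $\kappa_{2}\geq 1$ gives $\kappa_{2}-1-\tfrac{\epsilon}{42}\geq-\tfrac{\epsilon}{42}\geq-\epsilon$; together these yield $\bigl|\kappa_{2}-1-\tfrac{\epsilon}{42}\bigr|\leq\epsilon$ as claimed. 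This is a small repair, not a conceptual flaw, but the naive triangle-inequality version proves a slightly weaker constant than the one stated.
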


\Cref{corr:SmallRanks} shows that for a coarse tolerance of $\epsilon = 100$, a sketch size of $s\gtrsim d_{\textrm{eff}}(16\rho)$ suffices to ensure that the condition number of $P^{-1/2}H_{\rho}P^{-1/2}$ is no more than around $100$.
Two practical observations cement the importance of this corollary.
First, $d_{\textrm{eff}}(16\rho)$ is often significantly smaller than $d_{\textrm{eff}}(\rho)$, possibly by an order of magnitude or more.
Second, with a condition number around $100$, PCG is likely to converge very quickly. 
In fact, for modest condition numbers, PCG is known to converge much faster in practice than the theory would suggest \cite{trefethen1997numerical}.
It is only when the condition number reaches around $10^3$, that convergence starts to slow.
Thus, \Cref{corr:SmallRanks} helps explain why it is not necessary for the sketch size to equal the effective dimension in order for NysADMM to obtain significant accelerations.

\section{Numerical experiments}
\label{section:experiments}
\begin{table}[h]
\caption{Statistics of experiment datasets.}	
\label{dataset}
\vskip 0.15in
\begin{center}
\begin{small}
\begin{tabular}{lccc}
\toprule
Name & instances $n$ & features $d$ & nonzero \% \\
\midrule
STL-10 & 13000 & 27648 & 96.3 \\ 
CIFAR-10 & 60000 & 3073 & 99.7 \\
CIFAR-10-rf & 60000 & 60000 & 100.0\\
smallNorb-rf & 24300 & 30000 & 100.0 \\
E2006.train & 16087 & 150348 & 0.8 \\
sector & 6412 & 55197 & 0.3 \\
p53-rf & 16592 & 20000 & 100.0 \\
connect-4-rf & 16087 & 30000 & 100.0 \\
realsim-rf & 72309 & 50000 & 100.0  \\
rcv1-rf & 20242 & 30000 & 100.0 \\
cod-rna-rf & 59535 & 60000 & 100.0 \\
\bottomrule
\end{tabular}
\end{small}
\end{center}
\vskip -0.1in
\end{table}
In this section, we evaluate the performance of NysADMM
on different large-scale applications: lasso, 
$\ell_1$-regularized logistic regression, and SVM. 
For each type of problems, we compare NysADMM 
with popular standard solvers. We run all experiments on a server 
with 128 Intel Xeon E7-4850 v4 2.10GHz CPU cores and 1056GB. 
We repeat every numerical experiment ten times and report
the mean solution time. 
We highlight the best-performing method in bold. The tolerance of NysADMM at each iteration is chosen as the geometric mean $\varepsilon^{k + 1} = \sqrt{r_p^k r_d^k}$ of the ADMM primal residual $r_p$ and dual residual $r_d$ at the previous iteration, as in \cite{osqp}. See \citet{MAL-016} for more motivation and details. 
An alternative is to choose the tolerance sequence as any decaying sequence with respect to the righthand side norm as the number of NysADMM iteration increases, e.g., $\varepsilon^k = \|r^k\|_2 / k^{\beta}$, where $\beta$ is a predefined factor. These two strategies perform similarly; our experiments use the first strategy.

We choose a sketch size $s=50$ to compute the Nystr{\"o}m approximation throughout our experiments. Inspired by \Cref{thm:AdaNysADMM} and \Cref{corr:SmallRanks}, even if the sketch size is much smaller than the effective dimension, NysADMM can still achieve significant acceleration in practice. 

To support experiments with standard solvers, 
for each problem class we use the same stopping criterion
and other parameter settings as the standard solver.
These experiments use datasets with $n>10,000$ or $d>10,000$ from LIBSVM \cite{libsvm}, UCI \cite{uci}, and OpenML \cite{openml},
with statistics summarized in \Cref{dataset}.
We use a random feature map \cite{NIPS2007_013a006f, 4797607} to generate features 
for the data sets CIFAR-10, smallnorb, realsim, rcv1, and cod-rna, 
which increases both predictive performance and problem dimension.

\subsection{Lasso}
\label{subsection:expl1problem}
This subsection demonstrates the performance of NysADMM to solve the standard lasso problem \eqref{eq2}.
Here we compare NysADMM 
with three standard lasso solvers, SSNAL \cite{doi:10.1137/16M1097572}, mfIPM \cite{fountoulakis2014matrix}, and glmnet \cite{friedman2010regularization}.  
SSNAL is a Newton method based solver; mfIPM is an interior point method based solver and glmnet is a coordinate descent based solver. 
In practice, these three solvers and NysADMM rely on 
different stopping criteria. 
In order to make a fair comparison, in our experiments, 
the accuracy of a solution $x$ for \eqref{eq2} is measured by the following relative Karush–Kuhn–Tucker (KKT) residual \cite{doi:10.1137/16M1097572}: 
\begin{equation}\label{eq31}
	\eta = \frac{\|x - \text{prox}_{\gamma\|\cdot\|_1}(x - A^T(Ax - b))\|}{1 + \|x\| + \|Ax - b\|}.
\end{equation}
For a given tolerance $\epsilon$, we stop the tested algorithms when $\eta < \epsilon$.
Note that stopping criterion \eqref{eq31} is rather strong:
if $\eta \leq 10^{-2}$ for NysADMM, then
the primal and dual gaps for ADMM are $\lesssim 10^{-4}$, 
which suffices for most applications. 
Indeed, for many machine learning problems, 
lower bounds on the statistical performance of the estimator \citep{loh2017lower}
imply an unavoidable level of statistical error that is greater than this optimization error for most applications.
Optimizing the objective beyond the level of statistical error \citep{agarwal2012fast, loh2015regularized} 
does not improve generalization. 
For standard lasso experiments, we fix the regularization parameter at $\gamma = 1$. 
\begin{table}[tbhp]
\caption{Results for low precision lasso experiment.}	
\label{lassolowres}
\vskip 0.15in
\begin{center}
	\begin{small}
	\begin{tabular}{lcccc}
	\toprule
	\multirow{2}{*}{Task} & \multicolumn{4}{c}{Time for $\epsilon = 10^{-1}$ (s)} \\\cline{2-5}
	& NysADMM & mfIPM & SSNAL & glmnet \\
	\midrule
	STL-10 & \bf{165} &  573 & 467 & 278 \\ 
	CIFAR-10-rf & \bf{251} & 655 & 692 & 391  \\
	smallNorb-rf & \bf{219}  & 552  & 515 & 293  \\
	E2006.train &  \bf{313} &  875  & 903 & 554   \\
	sector & \bf{235} & 678  & 608  & 396  \\
	realsim-rf & \bf{193} &  -- & 765 & 292  \\
	rcv1-rf & \bf{226} & 563 & 595 & 273 \\
	cod-rna-rf & \bf{208} & 976 & 865 & 324 \\
	\bottomrule
	\end{tabular}
	\end{small}
\end{center}
\vskip -0.1in
\end{table}
\begin{table}[tbhp]
	\caption{Results for high precision lasso experiment.}	
	\label{lassohighres}
	\vskip 0.15in
	\begin{center}
	\begin{small}
	\begin{tabular}{lcccc}
	\toprule
	\multirow{2}{*}{Task} & \multicolumn{4}{c}{Time for $\epsilon = 10^{-2}$ (s)} \\\cline{2-5}
	& NysADMM & mfIPM & SSNAL & glmnet \\
	\midrule
	STL-10  & \bf{406} & 812 & 656 & 831\\ 
	CIFAR-10-rf & \bf{715} & 1317  & 1126 & 1169 \\
	smallNorb-rf & \bf{596} & 896 & 768 & 732 \\
	E2006.train & 1657 &  1965& \bf{1446} & 2135 \\
	sector & 957 & 1066 & \bf{875} & 1124 \\
	realsim-rf & \bf{732}& --& 1035 & 922 \\
	rcv1-rf & \bf{593}& 853 & 715 & 736 \\
	cod-rna-rf & \bf{715}& 1409 & 1167 & 997\\
	\bottomrule
	\end{tabular}
	\end{small}
	\end{center}
	\vskip -0.1in
\end{table}

\Cref{lassolowres} and \Cref{lassohighres} show results for lasso experiments. 
The average solution time for NysADMM, mfIPM, SSNAL, and glmnet 
with $\epsilon = 10^{-1}, 10^{-2}$ on different tasks are provided. 
Here mfIPM fails to solve the realsim-rf instance since it requires $n < d$. 
For precision of $\epsilon = 10^{-1}$, 
NysADMM is faster than all other solvers and at least 3 times faster than both mfIPM and SSNAL. 
For precision of $\epsilon = 10^{-2}$, 
NysADMM is still faster than all other solvers for all instances 
except E2006.train and sector.  
The results are fair since both SSNAL and mfIPM are second-order solvers 
and can reach high precision. NysADMM and glmnet 
are first-order solvers; they reach low precision quickly, but improve accuracy more slowly than a second order method. 
In practice, for large-scale machine learning problems, 
a low precision solution usually suffices, 
as decreasing optimization error beyond the statistical
noise in the problem does not improve generalization.
Further, our algorithm achieves bigger improvements 
on dense datasets compared with sparse datasets,
as the factors of the Nystr{\"o}m approximation are dense
even for sparse problems. 
\begin{figure}[tbhp]
	\centering
	\includegraphics[width = \columnwidth]{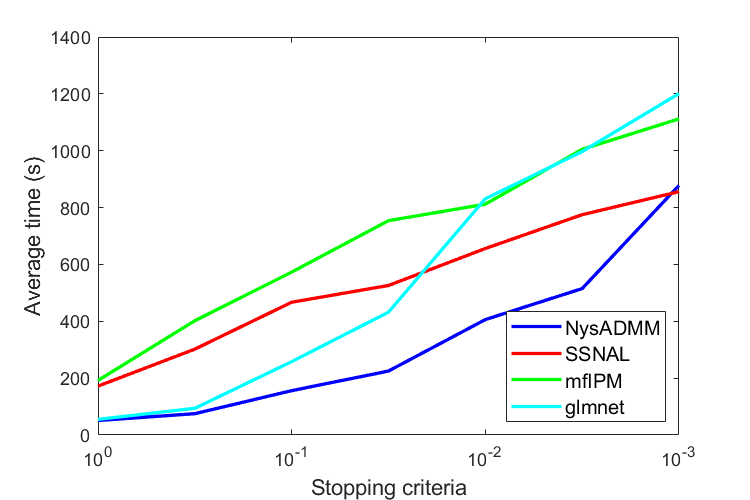}
	\caption{Solution times for varying tolerance $\epsilon$ on STL-10.}
	\label{precisionpath}
\end{figure}
To further illustrate the results, we vary the value of $\epsilon$ from 
$1.0$ to $10^{-3}$ on STL-10 task and plot the average solution time 
for four methods in \Cref{precisionpath}. 
We can see NysADMM is as least as fast as other solvers when
$\epsilon > 10^{-3}$,
and often twice as fast for many practical values of $\epsilon$. 

\subsection{$\ell_1$-regularized logistic regression}
\label{subsection:explogisticreg}
This subsection demonstrates the performance of NysADMM on 
$\ell_1$-regularized logistic regression, \eqref{eq9} from \Cref{subsection:logisticreg}. 
We test the method on binary classification problems 
using the same random feature map as in \Cref{subsection:expl1problem}.

The $\ell_1$-regularized logistic regression experiments compare NysADMM 
with the SAGA algorithm, a stochastic average gradient like algorithm \cite{NIPS2014_ede7e2b6} 
implemented in sklearn, and the accelerated proximal gradient (APG) algorithm \cite{beck2009fista, nesterov2013gradient, o2015adaptive}.
For the purpose of fair comparison, 
all the algorithms are stopped when the maximum relative change 
in the problem variable (that is, the regression coefficients) 
$\frac{\|x_k - x_{k+1}\|_\infty}{\|x_k\|_\infty}$ is less than the tolerance.
The tolerance is set to $10^{-3}$; other settings match
the default settings of the sklearn logistic regression solver. 

An overview of $\ell_1$-regularized logistic regression experiment results 
are provided in \Cref{logisticres}.
NysADMM uniformly out performs SAGA, solving each problem at least twice as fast. 
Similarly, NysADMM is at least twice as fast as APG on all datasets except STL-10, where it performs comparably.
In the cases of p53-rf and connect-4-rf, NysADMM runs significantly faster than its competitors, being four times faster than SAGA and three times faster than APG.
These large performance gains are due to the size of the problem instances and their conditioning. 
From \citep{NIPS2014_ede7e2b6}, the convergence speed of SAGA depends on the problem instance size and condition number. 
Our test cases have large instance sizes and condition numbers, which lead to slow convergence of SAGA. 
The situation with APG is similar. Indeed, although ADMM and proximal gradient methods generally have the same $O(1/t)$-convergence rate \cite{beck2009fista,he20121}, NysADMM is less sensitive ill-conditioning than APG. 

\begin{table}[tbhp]
	\caption{Results for $\ell_1$-regularized logistic regression experiment.}	
	\label{logisticres}
	\vskip 0.15in
	\begin{center}
	\begin{small}
	\begin{tabular}{lccc}
	\toprule
	Task & NysADMM (s) & SAGA (s) & APG (s) \\
	\midrule
	STL-10 & 3012 & 6083 & \bf{2635}\\ 
	CIFAR-10-rf & \bf{7884} & 21256 & 17292\\
	p53-rf & \bf{528} & 2116 & 1880\\
	connect-4-rf & \bf{866} & 4781 & 7365\\
	smallnorb-rf & \bf{1808} & 6381 & 4408\\
	rcv1-rf & \bf{1237} & 3988 & 2759\\
	con-rna-rf & \bf{7528} & 21513 & 16361 \\
	\bottomrule
	\end{tabular}
	\end{small}
	\end{center}
	\vskip -0.1in
\end{table}
\subsection{Support vector machine}
\label{subsection:expsvm}
This subsection demonstrates the performance of NysADMM 
on kernel SVM problem for binary classification, \eqref{eq8} from \Cref{subsection:svm}. 
The SVM experiments compare NysADMM with the LIBSVM solver \cite{libsvm}. 
LIBSVM uses sequential minimal optimization (SMO)
to solve the dual SVM problem. 
We use the same stopping criteria as the LIBSVM solver, 
which stops the NysADMM method when the ADMM dual gap reaches $10^{-4}$ level. 
All SVM experiments use the RBF kernel. 
\begin{table}[tbhp]
	\caption{Results of SVM experiment.}	
	\label{svmres}
	\vskip 0.15in
	\begin{center}
	\begin{small}
	\begin{tabular}{lcc}
	\toprule
	Task & NysADMM time (s) & LIBSVM time (s) \\
	\midrule
	STL-10 & \bf{208} & 11573 \\ 
	CIFAR-10 & \bf{1636} & 8563 \\
	p53-rf & \bf{291} & 919 \\
	connect-4-rf & \bf{7073} & 42762 \\
	realsim-rf & \bf{17045} & 52397 \\
	rcv1-rf & \bf{564} & 32848 \\
	cod-rna-rf & \bf{4942} & 36791 \\
	\bottomrule
	\end{tabular}
	\end{small}
	\end{center}
	\vskip -0.1in
\end{table}
\Cref{svmres} shows the results of SVM experiments.
On these problems, NysADMM is at least 3 times faster (and up to 58 times faster) than the LIBSVM solver. 
Consider problem formulation \eqref{eq8}, 
with the RBF kernel. The Gram matrix $\text{diag}(b) K \text{diag}(b)$ 
is dense and approximately low rank: exactly the setting in which 
NysADMM should be expected to perform well.
In constrast, the SMO-type decomposition in LIBSVM solver 
works better for sparse problems, as it updates only two variables at each iteration. 
\section{Conclusion}
\label{section:conclusion}
In this paper, we have developed a scalable new algorithm, NysADMM, that combines inexact ADMM and the randomized low-rank Nystr{\"o}m approximation to accelerate composite convex optimization. We show that NysADMM exhibits strong benefits both in theory and in practice. 
Our theory shows that when the Nystr{\"o}m preconditioner is constructed with an appropriate rank, NysADMM requires only a constant number of matvecs to solve the ADMM subproblem. 
We have also provided an adaptive strategy for selecting the rank that possesses a similar computational profile to the non-adaptive algorithm, and allows us to realize the theoretical benefits in practice. 
Further, numerical results demonstrate that NysADMM is as least twice as fast as standard methods on large dense lasso, regularized logistic regression, and kernalized SVM problems.
More broadly, this paper shows the promise of recent advances in RandNLA to 
provide practical accelerations for important large-scale optimization algorithms.

\bibliography{camera_ready.bib}
\bibliographystyle{icml2022}

\newpage
\appendix
\onecolumn
\section{Proofs of main results}
\label{section:proofs}
In this section we give the proofs for the main results of the paper: \cref{thm:NysCondNum}, \cref{thm:NysADMMConv}, and \cref{thm:AdaNysADMM}.
\subsection{Preliminaries}
We start by recalling some useful background information and technical results that are useful for proving the main theorems. In order to obtain the exponentially small failure probabilities in \cref{thm:NysCondNum} and \cref{thm:AdaNysADMM} we take a different approach from the one in \citet{frangella2021randomized}. 
The proofs are based on regularized Schur complements and approximate matrix multiplication. Our arguments are inspired by the techniques used to establish statistical guarantees for approximate kernel ridge regression via column sampling schemes \cite{bach2013sharp, alaoui2015fast}. 
\subsubsection{Nystr{\"o}m Approximation: Properties}
We start by recalling some important properties of the Nystr{\"o}m approximation \eqref{Nyssketch}. We shall also need the regularized Nystr{\"o}m approximation. Recall that $\Omega\in\mathbb{R}^{d\times s}$ denotes the test matrix from which we construct the Nystr{\"o}m approximation. 
Given $\sigma>0$, the regularized Nystr{\"o}m approximation with respect to $\Omega$ is defined as
\begin{equation}
\label{eq:RegNysAppx}
    H\langle \Omega \rangle_{\sigma} = (H\Omega)(\Omega^{T}H\Omega+\sigma I)^{-1}(H\Omega)^{T} .
\end{equation}
Furthermore, let $H = V\Lambda V^{T}$ be the eigendecomposition of $H$ and define $D_{\sigma} = H(H+\sigma I)^{-1} = \Lambda(\Lambda+\sigma I)^{-1}$. We shall see below that $D_{\sigma}$ plays a crucial role in the analysis. 
The following lemmas are well known in the literature and summarize the properties of the Nystr{\"o}m and regularized Nystr{\"o}m approximation. \Cref{NysPropLemma} may be found in \citet{frangella2021randomized} and \cref{RegNysLemma} in \citet{alaoui2015fast}.
\begin{lemma}
\label{NysPropLemma}
Let $H\langle \Omega \rangle$ be a Nystr{\"o}m approximation of a symmetric psd matrix $H$.
Then 
\begin{enumerate}
    \item The approximation $H\langle \Omega\rangle $ is psd and has rank at most $s$.
    \item The approximation $H\langle \Omega \rangle$ depends only on $\mathrm{range}(\Omega)$.
    \item\label{it:nys-order} In the Loewner order, $H\langle \Omega\rangle\preceq H$.
    \item\label{it:nys-eigs} In particular, the eigenvalues satisfy $\lambda_j(H\langle \Omega \rangle) \leq \lambda_{j}(H)$ for each $1 \leq j \leq d$.
\end{enumerate}
\end{lemma}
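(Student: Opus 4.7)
The plan is to derive all four properties from a single structural identity that exposes the Nyström approximation as a compression of $H$ by an orthogonal projector. Specifically, I would first establish
\[
H\langle \Omega \rangle = H^{1/2}\, P_{H^{1/2}\Omega}\, H^{1/2},
\]
where $P_{H^{1/2}\Omega}$ denotes the orthogonal projector onto $\mathrm{range}(H^{1/2}\Omega)$. To verify this, I would use the standard identity $P_{H^{1/2}\Omega} = (H^{1/2}\Omega)\bigl((H^{1/2}\Omega)^{T}(H^{1/2}\Omega)\bigr)^{\dagger}(H^{1/2}\Omega)^{T}$, simplify the inner Gram matrix to $\Omega^{T}H\Omega$, and then multiply on both sides by $H^{1/2}$, using $H^{1/2}\cdot H^{1/2}=H$ to recover $(H\Omega)(\Omega^{T}H\Omega)^{\dagger}(H\Omega)^{T}$.

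Once this identity is in hand, item~1 is immediate: for any $v\in\mathbb{R}^{d}$, $v^{T}H\langle\Omega\rangle v=\|P_{H^{1/2}\Omega}H^{1/2}v\|_{2}^{2}\geq 0$, so $H\langle\Omega\rangle$ is psd, and $\mathrm{rank}(H\langle\Omega\rangle)\leq \mathrm{rank}(P_{H^{1/2}\Omega})\leq \mathrm{rank}(\Omega)\leq s$. Item~2 follows from the observation that $\mathrm{range}(H^{1/2}\Omega)$ depends only on $\mathrm{range}(\Omega)$, so the projector $P_{H^{1/2}\Omega}$, and hence $H\langle\Omega\rangle$, is invariant under any right multiplication of $\Omega$ by an invertible matrix.

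For item~3, I would use the elementary fact that every orthogonal projector $P$ satisfies $P\preceq I$, and then conjugate by $H^{1/2}$, which preserves the Loewner order for psd inputs:
\[
H\langle\Omega\rangle=H^{1/2}P_{H^{1/2}\Omega}H^{1/2}\preceq H^{1/2}\,I\,H^{1/2}=H.
\]
Item~4 then follows from Weyl's monotonicity theorem applied to the Loewner inequality in item~3.

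There is essentially no serious obstacle: the only mildly technical step is the pseudoinverse manipulation establishing the key identity, which must accommodate the case where $H^{1/2}\Omega$ has linearly dependent columns. I would handle this cleanly by reducing to a full-rank QR (or thin SVD) factorization of $H^{1/2}\Omega$, so that the pseudoinverse collapses to a genuine inverse on the relevant subspace and the projector formula becomes transparent.
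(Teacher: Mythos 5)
Your proposal is correct, and the projection identity $H\langle \Omega \rangle = H^{1/2} P_{H^{1/2}\Omega} H^{1/2}$ is exactly the standard route: the paper itself supplies no proof of this lemma, deferring to \citet{frangella2021randomized}, where the argument proceeds via the same identity, the bound $P_{H^{1/2}\Omega} \preceq I$ conjugated by $H^{1/2}$, and Weyl monotonicity for the eigenvalue comparison. Your handling of the pseudoinverse in the rank-deficient case (reducing to a thin factorization so that $M(M^{T}M)^{\dagger}M^{T}$ is visibly the orthogonal projector onto $\mathrm{range}(M)$) is the right way to make the key identity airtight.
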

\begin{lemma}
\label{RegNysLemma}
Let $H$ be a symmetric psd matrix, $\sigma>0$. Define $E = H-H\langle \Omega \rangle$ and $E_{\sigma} = H-H\langle \Omega \rangle_{\sigma}$. Then the following hold.
\begin{enumerate}
    \item $H\langle\Omega\rangle_{\sigma}\preceq H\langle\Omega\rangle\preceq H$.
    \item $0\preceq E \preceq E_{\sigma}.$
    \item If $\|D^{1/2}_{\sigma}V^{T}(\frac{1}{s}\Omega\Omega^{T})VD^{1/2}_{\sigma}-D_{\sigma}\|\leq \eta < 1$, then
    \begin{equation}
       0\preceq E_{\sigma} \preceq \frac{\sigma}{1-\eta}I. 
    \end{equation}
\end{enumerate}
\end{lemma}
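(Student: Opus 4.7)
The plan is to dispatch the three claims in sequence, with Parts 1 and 2 reducing to algebra via a single reformulation and Part 3 being the main content. The key device is to set $A = H^{1/2}\Omega$, so that $\Omega^{T}H\Omega = A^{T}A$, and combine the push-through identity $A(A^{T}A+\sigma I)^{-1}A^{T} = AA^{T}(AA^{T}+\sigma I)^{-1}$ with the elementary identity $B(B+\sigma I)^{-1} = I - \sigma(B+\sigma I)^{-1}$. This yields the clean formulas
\[
H\langle\Omega\rangle = H^{1/2}P_{A}H^{1/2}, \qquad H\langle\Omega\rangle_{\sigma} = H - \sigma H^{1/2}(AA^{T}+\sigma I)^{-1}H^{1/2},
\]
where $P_{A} = A(A^{T}A)^{\dagger}A^{T}$ is the orthogonal projector onto $\mathrm{range}(A)$.

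With these formulas Parts 1 and 2 are essentially immediate. In any eigenbasis of $AA^{T}$ one reads off the pointwise comparison $AA^{T}(AA^{T}+\sigma I)^{-1} \preceq P_{A} \preceq I$, so conjugating by $H^{1/2}$ gives $H\langle\Omega\rangle_{\sigma} \preceq H\langle\Omega\rangle \preceq H$. Subtracting from $H$ and reversing yields Part 2, since $E = H - H\langle\Omega\rangle$ and $E_{\sigma} = H - H\langle\Omega\rangle_{\sigma}$.

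For Part 3, I would start from the closed form $E_{\sigma} = \sigma H^{1/2}(AA^{T}+\sigma I)^{-1}H^{1/2}$. Assume first $H \succ 0$; the singular case is recovered by replacing $H$ with $H+\epsilon I$ and sending $\epsilon \downarrow 0$. The desired bound $E_{\sigma} \preceq \tfrac{\sigma}{1-\eta}I$ then rearranges to
\[
(1-\eta)H \preceq AA^{T} + \sigma I.
\]
Diagonalizing $H = V\Lambda V^{T}$ and setting $\tilde\Omega = V^{T}\Omega$, this becomes $(1-\eta)\Lambda \preceq \Lambda^{1/2}\tilde\Omega\tilde\Omega^{T}\Lambda^{1/2} + \sigma I$. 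I would obtain it by conjugating the hypothesis $D_{\sigma}^{1/2}\bigl(\tfrac{1}{s}\tilde\Omega\tilde\Omega^{T}\bigr)D_{\sigma}^{1/2} \succeq D_{\sigma} - \eta I$ by $(\Lambda+\sigma I)^{1/2}$ on both sides; because everything is diagonal in this basis, the commuting identities $(\Lambda+\sigma I)^{1/2}D_{\sigma}^{1/2} = \Lambda^{1/2}$ and $(\Lambda+\sigma I)D_{\sigma} = \Lambda$ collapse the sandwich to $\tfrac{1}{s}\Lambda^{1/2}\tilde\Omega\tilde\Omega^{T}\Lambda^{1/2} \succeq (1-\eta)\Lambda - \eta\sigma I$. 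Since $s \geq 1$ and $\Lambda^{1/2}\tilde\Omega\tilde\Omega^{T}\Lambda^{1/2}$ is psd, the same lower bound holds after dropping the $1/s$, and adding $\sigma I$ gives $\Lambda^{1/2}\tilde\Omega\tilde\Omega^{T}\Lambda^{1/2} + \sigma I \succeq (1-\eta)\Lambda + (1-\eta)\sigma I \succeq (1-\eta)\Lambda$, as required.

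The main obstacle is spotting the correct chain of reformulations rather than any hard estimate: the formula $E_{\sigma} = \sigma H^{1/2}(AA^{T}+\sigma I)^{-1}H^{1/2}$ together with the equivalence $E_{\sigma} \preceq \tfrac{\sigma}{1-\eta}I \iff (1-\eta)H \preceq AA^{T}+\sigma I$ is the linchpin; without it one is left staring at opaque Loewner inequalities. Once the reformulation is in hand and one passes to the eigenbasis of $H$, the conjugation identities telescope the hypothesis into the conclusion, and the only remaining subtleties are the harmless $1/s$ factor (absorbed using $s \geq 1$ and positivity) and the routine perturbation argument to recover the singular-$H$ case.
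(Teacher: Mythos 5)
Your proof is correct. Note that the paper does not actually prove this lemma---it cites \citet{alaoui2015fast} for it---so there is no in-paper argument to compare against; your write-up supplies essentially the standard argument from that reference (the push-through identity $A(A^{T}A+\sigma I)^{-1}A^{T}=AA^{T}(AA^{T}+\sigma I)^{-1}$ with $A=H^{1/2}\Omega$, the closed form $E_{\sigma}=\sigma H^{1/2}(AA^{T}+\sigma I)^{-1}H^{1/2}$, and the conjugation by $(\Lambda+\sigma I)^{1/2}$ that converts the hypothesis into $(1-\eta)H\preceq AA^{T}+\sigma I$), and all the individual steps check out, including the absorption of the $1/s$ factor. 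The one place I would tighten things is the reduction to nonsingular $H$: the perturbation $H\mapsto H+\epsilon I$ changes $V$ and $D_{\sigma}$, hence changes the hypothesis of item 3 itself, so you would need to argue that the relevant quantity $\|VD_{\sigma}^{1/2}V^{T}(\tfrac{1}{s}\Omega\Omega^{T})VD_{\sigma}^{1/2}V^{T}-VD_{\sigma}V^{T}\|$ varies continuously in $H$ and that the bound $\eta_{\epsilon}<1$ survives the limit. This works, but it is cleaner to avoid inverting $H$ entirely: writing $E_{\sigma}=\sigma MM^{T}$ with $M=H^{1/2}(AA^{T}+\sigma I)^{-1/2}$ and using $\|MM^{T}\|=\|M^{T}M\|$ gives the equivalence
\begin{equation*}
E_{\sigma}\preceq \tfrac{\sigma}{1-\eta}I \iff (AA^{T}+\sigma I)^{-1/2}H(AA^{T}+\sigma I)^{-1/2}\preceq \tfrac{1}{1-\eta}I \iff (1-\eta)H\preceq AA^{T}+\sigma I
\end{equation*}
directly for any psd $H$, after which your conjugation argument applies verbatim.
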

\Cref{RegNysLemma} relates $H\langle \Omega\rangle_{\sigma}$ to $H\langle \Omega \rangle$ and $H$. In particular, item 2 implies that $\|E\|\leq \|E_{\sigma}\|$, so controlling $E_{\sigma}$ controls $E$. Item 3 shows that $E_{\sigma}$ can be controlled by the spectral norm of the matrix
\begin{equation}
\label{eq:AppxMatMul}
    D^{1/2}_{\sigma}V^{T}\frac{1}{s}\Omega\Omega^{T}VD^{1/2}_{\sigma}-D_{\sigma}.
\end{equation}
The spectral norm of \eqref{eq:AppxMatMul} can be bounded by observing 
\begin{align}
\mathbb{E}\left[D^{1/2}_{\sigma}V^{T}\frac{1}{s}\Omega\Omega^{T}VD^{1/2}_{\sigma}\right] &=  \\
D^{1/2}_{\sigma}V^{T}\mathbb{E}\left[\frac{1}{s}\Omega\Omega^{T}\right]VD^{1/2}_{\sigma} &= \\ 
D^{1/2}_{\sigma}V^{T}VD^{1/2}_{\sigma} = D_{\sigma}.\    
\end{align}
Thus, $D^{1/2}_{\sigma}V^{T}\frac{1}{s}\Omega\Omega^{T}VD^{1/2}_{\sigma}$ is an unbiased estimator of $D_{\sigma},$ and may be viewed as approximating the product of the matrices $D_{\sigma}^{1/2}V^{T}$ and $VD_{\sigma}^{1/2}$. 
Hence results from randomized linear algebra can bound the spectral norm of this difference. 
In particular, it suffices to take a sketch size that scales with the effective dimension, using results on approximate matrix multiplication in terms of stable rank \cite{cohen2016optimal}.    

\subsubsection{Approximate matrix multiplication in terms of the effective dimension}
The condition in item 3 of \cref{RegNysLemma} follows immediately from theorem 1 of \citet{cohen2016optimal}. Unfortunately, the analysis in that paper does not yield explicit constants.
Instead we use a special case of their results due to \citet{lacotte2021fast} that provides explicit constants. 
\Cref{thm:AppxMatMulEffDim}
 simplifies theorem 5.2 in \citet{lacotte2021fast}.
\begin{theorem}
\label{thm:AppxMatMulEffDim}
Let $\Psi \in \mathbb{R}^{s\times d}$ be a matrix with i.i.d. $N(0,\frac{1}{s})$ entries. Given $\delta>0$, and $\tau \in (0,1)$ it holds with probability at least $1-\delta$ that
\begin{align}
    & \sup_{v\in \mathbb{S}^{d-1}}\langle v,(D^{1/2}_{\sigma}V^{T}\Psi^{T}\Psi VD^{1/2}_{\sigma}-D_{\sigma})v\rangle \leq \tau +2\sqrt{\tau}, \\
    & \inf_{v\in \mathbb{S}^{d-1}}\langle v,(D^{1/2}_{\sigma}V^{T}\Psi^{T}\Psi VD^{1/2}_{\sigma}-D_{\sigma})v\rangle \geq \tau-2\sqrt{\tau},
\end{align}
provided $s\geq \frac{\left(\sqrt{d_{\textup{eff}}(\sigma)}+\sqrt{8\log(16/\delta)}\right)^{2}}{\tau}$.
\end{theorem}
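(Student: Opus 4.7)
The plan is to reduce the claim to a sample-covariance concentration bound for a Gaussian sketch applied to a matrix whose stable rank equals $d_{\textup{eff}}(\sigma)$. Setting $M = VD_{\sigma}^{1/2}$, observe that $\|M\|\leq 1$ since $D_{\sigma} = \Lambda(\Lambda+\sigma I)^{-1}$ has eigenvalues in $[0,1]$, while
\[
\|M\|_{F}^{2} = \mathrm{tr}(D_{\sigma}) = d_{\textup{eff}}(\sigma),
\]
so $M$ has stable rank at most $d_{\textup{eff}}(\sigma)$. For every $v \in \mathbb{S}^{d-1}$,
\[
\langle v,(D^{1/2}_{\sigma}V^{T}\Psi^{T}\Psi VD^{1/2}_{\sigma}-D_{\sigma})v\rangle = \|\Psi M v\|_{2}^{2}-\|M v\|_{2}^{2},
\]
and since $\mathbb{E}[\Psi^{T}\Psi]=I_{d}$, this is a centered quadratic form in the Gaussian matrix $\Psi$. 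The goal is then a uniform two-sided bound on this quantity over $v \in \mathbb{S}^{d-1}$.

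Next, I would invoke Theorem~5.2 of \citet{lacotte2021fast}, which is precisely an approximate matrix multiplication bound for Gaussian sketches expressed in terms of stable rank, carrying explicit constants. Specialized to the matrix $M$ identified above, their result yields
\[
\sup_{v \in \mathbb{S}^{d-1}}\bigl|\|\Psi M v\|_{2}^{2}-\|M v\|_{2}^{2}\bigr|\leq \tau + 2\sqrt{\tau}
\]
with probability at least $1-\delta$ as soon as $s \geq (\sqrt{d_{\textup{eff}}(\sigma)}+\sqrt{8\log(16/\delta)})^{2}/\tau$. Splitting into the one-sided supremum and infimum inequalities then recovers the statement of the theorem verbatim.

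Were one to reprove the underlying sketching bound from scratch, the cleanest route would combine Gordon's Gaussian minimax comparison inequality with a pointwise $\chi^{2}$ tail bound for $\|\Psi M v\|_{2}^{2}$ and a chaining or $\epsilon$-net argument on the image set $\{M v:v\in\mathbb{S}^{d-1}\}$. Gordon's theorem is the mechanism that converts the Frobenius budget $\|M\|_{F}^{2} = d_{\textup{eff}}(\sigma)$ and the operator-norm bound $\|M\|\leq 1$ into the characteristic $\sqrt{d_{\textup{eff}}(\sigma)}+\sqrt{\log(1/\delta)}$ scaling appearing in the sketch-size requirement. The main obstacle is not the qualitative rate $s \gtrsim d_{\textup{eff}}(\sigma)/\tau$, which follows from essentially any sketching bound phrased in stable rank, but matching the precise constants $\sqrt{8\log(16/\delta)}$ and the lead coefficient $1$ in front of $\tau$ rather than a larger absolute constant. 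Reproducing these exactly requires the careful chaining bookkeeping in \citet{lacotte2021fast}, so the proof strategy here is simply to cite and specialize their theorem.
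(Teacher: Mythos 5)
Your proposal takes essentially the same route as the paper: the paper gives no independent proof of this statement, describing it only as a simplification/specialization of Theorem~5.2 of \citet{lacotte2021fast}, which is exactly the citation you invoke (your identification of $M=VD_{\sigma}^{1/2}$ with $\|M\|\le 1$ and $\|M\|_F^2=d_{\textup{eff}}(\sigma)$ is the correct specialization). One minor caveat: the stated lower bound $\tau-2\sqrt{\tau}$ is tighter than what a symmetric bound $\sup_v|\cdot|\le\tau+2\sqrt{\tau}$ would yield, so you should quote the two one-sided inequalities from \citet{lacotte2021fast} directly rather than deriving them from the absolute-value bound; this does not affect the downstream use in \cref{corr:AppxMatMul}, which only needs the upper bound.
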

Setting $\Psi = \frac{1}{\sqrt{s}}\Omega^{T}$, where $\Omega\in \mathbb{R}^{d\times s}$ has i.i.d. $N(0,1)$ entries, \cref{thm:AppxMatMulEffDim} yields the following corollary.
\begin{corollary}
\label{corr:AppxMatMul}
Let $\Omega \in \mathbb{R}^{d\times s}$ be a matrix with i.i.d. $N(0,1)$ entries. Given $\delta>0$, and $\tau \in (0,1)$ it holds with probability at least $1-\delta$ that
\begin{equation}
    \bigg\|D^{1/2}_{\sigma}V^{T}\frac{1}{s}\Omega\Omega^{T}VD^{1/2}_{\sigma}-D_{\sigma}\bigg\|\leq \tau+2\sqrt{\tau}
\end{equation}
provided $s\geq \frac{\left(\sqrt{d_{\textup{eff}}(\rho))}+\sqrt{8\log(16/\delta)}\right)^{2}}{\tau}$.
\end{corollary}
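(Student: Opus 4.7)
The plan is to derive \Cref{corr:AppxMatMul} as an immediate specialization of \Cref{thm:AppxMatMulEffDim}. The only real content is a change of variables to match the normalization used by the theorem, followed by a routine variational argument to convert the resulting two-sided quadratic-form bounds into a spectral-norm bound.

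First I would define $\Psi := \frac{1}{\sqrt{s}}\Omega^{T} \in \mathbb{R}^{s\times d}$. Since the entries of $\Omega$ are i.i.d.\ $N(0,1)$, the entries of $\Psi$ are i.i.d.\ $N(0,1/s)$, which is exactly the distributional hypothesis of \Cref{thm:AppxMatMulEffDim}. Under this identification $\Psi^{T}\Psi = \frac{1}{s}\Omega\Omega^{T}$, so the matrix appearing in the corollary,
\[
M := D^{1/2}_{\sigma}V^{T}\tfrac{1}{s}\Omega\Omega^{T}V D^{1/2}_{\sigma}-D_{\sigma},
\]
coincides with the symmetric matrix whose quadratic form is controlled by the theorem.

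Next I would apply \Cref{thm:AppxMatMulEffDim} with the same $\tau\in(0,1)$ and the same failure probability $\delta$. On an event of probability at least $1-\delta$ we simultaneously obtain $\sup_{v\in\mathbb{S}^{d-1}}\langle v,Mv\rangle\leq \tau+2\sqrt{\tau}$ and $\inf_{v\in\mathbb{S}^{d-1}}\langle v,Mv\rangle\geq \tau-2\sqrt{\tau}$. Since $M$ is symmetric, the variational characterization of its extreme eigenvalues gives $\|M\| = \max\bigl(\lambda_{\max}(M),\,-\lambda_{\min}(M)\bigr)$. The upper bound yields $\lambda_{\max}(M)\leq \tau+2\sqrt{\tau}$, and the lower bound yields $-\lambda_{\min}(M)\leq 2\sqrt{\tau}-\tau$; since $2\sqrt{\tau}-\tau\leq 2\sqrt{\tau}+\tau$, combining these two inequalities produces $\|M\|\leq \tau+2\sqrt{\tau}$, as claimed.

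Finally, the sketch-size hypothesis of the theorem, $s\geq (\sqrt{d_{\textup{eff}}(\sigma)}+\sqrt{8\log(16/\delta)})^{2}/\tau$, is identical (up to the naming of the regularization parameter) to the hypothesis of the corollary, so no further accounting is required. There is essentially no obstacle to overcome: the corollary is a notational repackaging of the theorem in terms of the unnormalized Gaussian test matrix $\Omega$, and the only step beyond substitution is the routine translation of simultaneous $\sup$/$\inf$ bounds into a spectral-norm bound.
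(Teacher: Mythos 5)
Your proposal is correct and matches the paper's own (very terse) justification, which simply sets $\Psi = \frac{1}{\sqrt{s}}\Omega^{T}$ and invokes \Cref{thm:AppxMatMulEffDim}; your additional step of converting the simultaneous $\sup$/$\inf$ quadratic-form bounds into the spectral-norm bound via $\|M\|=\max(\lambda_{\max}(M),-\lambda_{\min}(M))$ is the right (and only) extra detail needed. You also correctly note that the $d_{\textup{eff}}(\rho)$ appearing in the corollary's sketch-size condition should read $d_{\textup{eff}}(\sigma)$, which is a typo in the statement rather than a gap in your argument.
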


\subsubsection{Condition number of Nystr{\"o}m preconditoned linear system}
The following result is a simpler version of proposition 5.2 in \citet{frangella2021randomized}. 
\begin{proposition}
\label{NysPCGThm}
Let $\hat{H} = U \hat{\Lambda} U^T$ be any rank-$s$ Nystr{\"o}m approximation, with $s$th largest eigenvalue $\hat{\lambda}_{s}$,
and let $E = H-\hat{H}$ be the approximation error.  Construct the Nystr{\"o}m preconditioner $P$ as in \eqref{Nyspreconditioner}.
Then the condition number of the preconditioned matrix $P^{-1/2}H_{\rho}P^{-1/2}$ satisfies
\begin{equation}
    \label{eq-CondNumBnd}
\begin{aligned}
     \kappa_{2}(P^{-1/2}H_{\rho}P^{-1/2}) \leq \frac{\hat{\lambda}_{s}+\rho+\|E\|}{\rho}.
\end{aligned}
\end{equation}
\end{proposition}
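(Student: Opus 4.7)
My plan is to exploit the shared invariant-subspace structure of the preconditioner $P$ and the noise-free matrix $\hat{H}+\rho I$ to diagonalize the preconditioned system in closed form, then handle $E = H - \hat{H}$ as a psd perturbation. First I would write
\[
P^{-1/2}H_{\rho}P^{-1/2} = P^{-1/2}(\hat{H}+\rho I)P^{-1/2} + P^{-1/2}EP^{-1/2},
\]
and note that $E\succeq 0$ by item~\ref{it:nys-order} of \Cref{NysPropLemma}, so the perturbation is psd.

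Next I would diagonalize the noise-free term. Rewriting $\hat{H}+\rho I = U(\hat{\Lambda}+\rho I)U^{T} + \rho(I-UU^{T})$ and observing that $P = \tfrac{1}{\hat{\lambda}_{s}+\rho}U(\hat{\Lambda}+\rho I)U^{T} + (I-UU^{T})$ has exactly the same block-diagonal structure with respect to $\mathbb{R}^{d} = \mathrm{range}(U) \oplus \mathrm{range}(U)^{\perp}$, a direct calculation shows that $P^{-1/2}(\hat{H}+\rho I)P^{-1/2}$ acts as $(\hat{\lambda}_{s}+\rho)I$ on $\mathrm{range}(U)$ and as $\rho I$ on the orthogonal complement. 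Its extreme eigenvalues are therefore $\hat{\lambda}_{s}+\rho$ and $\rho$. The same view simultaneously yields $\|P^{-1}\| = 1$, since the eigenvalues of $P^{-1}$ are $(\hat{\lambda}_{s}+\rho)/(\hat{\lambda}_{i}+\rho) \leq 1$ on $\mathrm{range}(U)$ and equal $1$ on $\mathrm{range}(U)^{\perp}$.

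To conclude I would invoke Weyl's inequality. Because $P^{-1/2}EP^{-1/2}\succeq 0$, the minimum eigenvalue can only grow, so $\lambda_{\min}(P^{-1/2}H_{\rho}P^{-1/2})\geq \rho$. For the top eigenvalue, submultiplicativity gives $\|P^{-1/2}EP^{-1/2}\|\leq \|P^{-1}\|\cdot\|E\| = \|E\|$, so Weyl yields $\lambda_{\max}(P^{-1/2}H_{\rho}P^{-1/2})\leq \hat{\lambda}_{s}+\rho+\|E\|$, and dividing gives \eqref{eq-CondNumBnd}. The only point that takes a moment to appreciate is the identity $\|P^{-1}\|=1$: this is precisely why $P$ is scaled by $1/(\hat{\lambda}_{s}+\rho)$ in \eqref{Nyspreconditioner}, a normalization chosen so that the smallest eigenvalue of $P$ on $\mathrm{range}(U)$ matches its value $1$ on $\mathrm{range}(U)^{\perp}$. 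Once this design choice is recognized, the rest is a routine application of the min-max principle.
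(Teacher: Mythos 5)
Your proof is correct: the block-diagonal structure of $P$ and $\hat{H}+\rho I$ with respect to $\mathrm{range}(U)\oplus\mathrm{range}(U)^{\perp}$ does give $\lambda_{\min}\geq\rho$ and, via $\|P^{-1}\|=1$ and Weyl's inequality applied to the psd perturbation $P^{-1/2}EP^{-1/2}$, the bound $\lambda_{\max}\leq\hat{\lambda}_{s}+\rho+\|E\|$. The paper itself supplies no proof of \Cref{NysPCGThm} (it defers to proposition 5.2 of \citet{frangella2021randomized}), and your argument is essentially the same splitting-plus-perturbation argument used there.
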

\Cref{NysPCGThm} bounds the condition condition number of the Nystr{\"o}m preconditioned linear system in terms of $\hat{\lambda}_{s}, \rho$ and the approximation error $\|E\|$. 
We would like to emphasize that the bound in \cref{NysPCGThm} is deterministic.  

\subsection{Proofs of \Cref{thm:NysCondNum} and \Cref{corr:ADMMSubProb}}
We start with two lemmas from which \cref{thm:NysCondNum} follows easily. The first lemma and its proof appear in \citet{frangella2021randomized}.

\begin{lemma}
\label{KeyLemma}
Let $H\in \mathbb{S}_{n}^+(\mathbb{R})$ with eigenvalues $\lambda_{1}\geq\lambda_{2}\geq\cdots\geq\lambda_{d}$. Let $\rho > 0$ be regularization parameter,
and define the effective dimension as in~\eqref{eqn:deff-proof}.  Then the following statement holds.

Fix $\gamma>0$. If $j\geq (1+\gamma^{-1})d_{\textup{eff}}(\rho)$, then $\lambda_{j}\leq \gamma\rho$.

\end{lemma}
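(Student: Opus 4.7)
The plan is to proceed by contrapositive, using nothing more than the explicit sum formula
\[
d_{\textup{eff}}(\rho) \;=\; \mathrm{tr}\bigl(H(H+\rho I)^{-1}\bigr) \;=\; \sum_{i=1}^{d}\frac{\lambda_{i}}{\lambda_{i}+\rho},
\]
which follows by diagonalizing $H$. The key observation is that each summand $\lambda_i/(\lambda_i+\rho)$ is strictly increasing in $\lambda_i$, so any lower bound on $\lambda_j$ immediately gives a lower bound on the first $j$ summands of $d_{\textup{eff}}(\rho)$.

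Concretely, I would first assume for contradiction that $\lambda_{j} > \gamma\rho$. Since the eigenvalues are ordered $\lambda_{1}\geq\lambda_{2}\geq\cdots\geq\lambda_{d}$, this forces $\lambda_{i} > \gamma\rho$ for every $i\leq j$. By monotonicity of $x\mapsto x/(x+\rho)$ on $[0,\infty)$, each such term satisfies
\[
\frac{\lambda_{i}}{\lambda_{i}+\rho} \;>\; \frac{\gamma\rho}{\gamma\rho+\rho} \;=\; \frac{\gamma}{\gamma+1}.
\]

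Summing the first $j$ terms of the series and discarding the remaining nonnegative terms then gives
\[
d_{\textup{eff}}(\rho) \;\geq\; \sum_{i=1}^{j}\frac{\lambda_{i}}{\lambda_{i}+\rho} \;>\; j\cdot\frac{\gamma}{\gamma+1}.
\]
Rearranging yields $j < (1+\gamma^{-1})\,d_{\textup{eff}}(\rho)$, which contradicts the hypothesis $j\geq (1+\gamma^{-1})d_{\textup{eff}}(\rho)$. Hence $\lambda_{j}\leq \gamma\rho$, as claimed.

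I do not anticipate any genuine obstacle here: the entire argument is a two-line sandwich between the trace formula and the monotonicity of $x/(x+\rho)$. The only point that requires mild care is tracking strict versus non-strict inequalities (the strict inequality $\lambda_i > \gamma\rho$ produces the strict lower bound on the sum, which in turn gives the strict contradiction against a non-strict hypothesis on $j$), but this is bookkeeping rather than substance.
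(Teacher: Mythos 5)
Your proof is correct, and it is the standard argument: the paper itself does not reproduce a proof of this lemma but defers to \citet{frangella2021randomized}, where the same Markov-type bound on the partial sums of $\sum_i \lambda_i/(\lambda_i+\rho)$ is used. Your handling of the strict versus non-strict inequalities is also fine, so nothing needs to change.
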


\begin{lemma}
\label{lemma:NysErrLemma}
Let $\epsilon>0$ and $E = H-H\langle \Omega \rangle$. Suppose we construct a randomized Nystr{\"o}m approximation from a standard Gaussian random matrix $\Omega$ with sketch size $s\geq 8\left(\sqrt{d_{\textup{eff}}(\epsilon)}+\sqrt{8\log(\frac{16}{\delta}})\right)^{2}$. Then the event 
\begin{equation}
    \mathcal{E} = \{\|E\|\leq 6\epsilon\},
\end{equation}
holds with probability at least $1-\delta$.
\end{lemma}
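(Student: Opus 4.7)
The plan is to combine the structural bound on the Nystr{\"o}m error in Lemma~\ref{RegNysLemma} with the probabilistic approximate matrix multiplication guarantee in Corollary~\ref{corr:AppxMatMul}. The chain of reasoning is: bound $\|E\|$ by $\|E_\sigma\|$ deterministically, reduce $\|E_\sigma\|$ to the spectral deviation of a random quadratic form, and then apply Corollary~\ref{corr:AppxMatMul} with $\sigma = \epsilon$ and a carefully chosen $\tau$.

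First, by item~2 of Lemma~\ref{RegNysLemma} applied with $\sigma = \epsilon$, we have $0 \preceq E \preceq E_\epsilon$, so $\|E\| \le \|E_\epsilon\|$. Thus it suffices to control $\|E_\epsilon\|$. By item~3 of the same lemma, if we can show that
\[
\Bigl\|D^{1/2}_{\epsilon}V^{T}\tfrac{1}{s}\Omega\Omega^{T}VD^{1/2}_{\epsilon}-D_{\epsilon}\Bigr\| \le \eta
\]
for some explicit $\eta < 1$, then $\|E_\epsilon\| \le \epsilon / (1 - \eta)$, which yields the desired $\|E\| \le 6\epsilon$ whenever $1/(1-\eta) \le 6$.

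Next I would invoke Corollary~\ref{corr:AppxMatMul} with $\sigma = \epsilon$ and $\tau = 1/8$. The assumed sketch size $s \ge 8(\sqrt{d_{\textup{eff}}(\epsilon)} + \sqrt{8\log(16/\delta)})^{2}$ is exactly what is required for this choice of $\tau$, since $8 = 1/\tau$. The corollary then produces, with probability at least $1-\delta$, the spectral deviation bound $\tau + 2\sqrt{\tau} = \tfrac{1}{8} + \tfrac{\sqrt{2}}{2}$. A short numerical check shows $\eta := \tfrac{1}{8} + \tfrac{\sqrt{2}}{2} < 1$, and moreover $1 - \eta > 1/6$, so $1/(1-\eta) < 6$.

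Putting the pieces together on the high-probability event from Corollary~\ref{corr:AppxMatMul} gives $\|E\| \le \|E_\epsilon\| \le \epsilon/(1-\eta) \le 6\epsilon$, proving the claim. No step appears genuinely hard: the only mild subtlety is choosing $\tau$ so that both the sketch-size hypothesis matches the stated bound and the resulting constant $1/(1-\tau-2\sqrt{\tau})$ comes in under $6$; $\tau = 1/8$ is essentially the natural sweet spot, which is presumably why the factor $8$ appears in the stated sketch size.
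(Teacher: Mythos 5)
Your proposal is correct and follows essentially the same route as the paper's proof: bound $\|E\|$ by $\|E_\epsilon\|$ via item~2 of \cref{RegNysLemma}, invoke \cref{corr:AppxMatMul} with $\sigma=\epsilon$ and $\tau=1/8$ to get $\eta=\tfrac{1}{8}+\tfrac{\sqrt{2}}{2}$, and apply item~3 to conclude $\|E\|\le \epsilon/(1-\eta)\le 6\epsilon$. (The paper's text says ``$\tau=8$,'' which is a typo for $\tau=1/8$; your reading is the intended one.)
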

\begin{proof}
Let $\Omega_{s} = \frac{1}{\sqrt{s}}\Omega$ and observe that $H\langle \Omega_{s} \rangle = H\langle \Omega \rangle$. Now the conditions of \cref{corr:AppxMatMul} are satisfied with $\sigma = \epsilon$ and $\tau = 8$. Consequently with probability at least $1-\delta$,
\[ \bigg\|D^{1/2}_{\epsilon}V^{T}\frac{1}{s}\Omega\Omega^{T}VD^{1/2}_{\epsilon}-D_{\epsilon}\bigg\|\leq \frac{1}{8}+\frac{\sqrt{2}}{2}.\]
Hence applying \cref{RegNysLemma} with $\sigma = \epsilon$ and $\eta = \frac{1}{8}+\frac{\sqrt{2}}{2}$, we obtain
\[\bigg\|H-H\langle \Omega_{s}\rangle_{\epsilon}\bigg\|\leq 6\epsilon,\]
with probability at least $1-\delta$.
Recalling our initial observation, we conclude the desired result.
\end{proof}
\subsubsection{Proof of \cref{thm:NysCondNum}}
\begin{proof}
As $s\geq 8\left(\sqrt{d_{\textrm{eff}}(\rho)}+\sqrt{8\log(\frac{16}{\delta}})\right)^{2}$ we have that $\|E\|\leq 6\rho$ with probability at least $1-\delta$ by \cref{lemma:NysErrLemma}.
Furthermore, $\hat{\lambda}_{s}\leq \frac{\rho}{7}$ by item 3 of \cref{NysPropLemma} and \cref{KeyLemma} with $\gamma = 1/7$. 
Combining this with \cref{NysPCGThm}, we conclude with probability at least $1-\delta$,
\begin{align*}
    \kappa_{2}(P^{-1/2}H_{\rho}P^{-1/2})&\leq \frac{\hat{\lambda}_{s}+\rho+\|E\|}{\rho} \\
    &\leq 1+6+\frac{1}{7}\leq 8
\end{align*}
as desired.
\end{proof}

\subsubsection{Proof of \Cref{corr:ADMMSubProb}}
\begin{proof}
Let $A = P^{-1/2}H_{\rho}P^{-1/2}$ and condition on the event that $\kappa_{2}(A)\leq 8$, which holds with probability at least $1-\delta$. The standard theory for convergence of CG \cite{trefethen1997numerical} guarantees after $t$ iterations that,
\begin{equation}
    \frac{\|x_t-\tilde{x}_{\star}\|_{A}}{\|\tilde{x}_\star\|_{A}} \leq 2\left(\frac{\sqrt{\kappa_2(A)}-1}{\sqrt{\kappa_{2}(A)}+1}\right)^{t}
\end{equation}
where $\|x\|_{A} = x^{T}Ax$.
\Cref{thm:NysCondNum} guarantees that the \nystrom preconditioned matrix satisfies $\kappa_{2}(A)\leq 8$, so the above display may be majorized as
\begin{equation}
    \frac{\|x_t-\tilde{x}_{\star}\|_{A}}{\|\tilde{x}_\star\|_{A}} \leq \left(\frac{1}{2}\right)^{t-1}.
\end{equation}
Now, from the elementary inequality 
\begin{equation}
\lambda_{d}(A)\|x\|_2\leq \|x\|_{A}\leq \lambda_{1}(A)\|x\|_{2}, 
\end{equation}
we conclude
\begin{equation}
    \frac{\|x_t-\tilde{x}_{\star}\|_{2}}{\|\tilde{x}_\star\|_{2}}\leq \kappa_{2}(A)\left(\frac{1}{2}\right)^{t-1}\leq \left(\frac{1}{2}\right)^{t-4}.
\end{equation}
To obtain the claimed result, multiply both sides by $\|\tilde{x}_\star\|_2$ and solve $\|\tilde{x}_\star\|_2\left(\frac{1}{2}\right)^{t-4} = \epsilon$ for $t$. 
\end{proof}

\subsection{Proof of \Cref{thm:NysADMMConv}}
This proof is a natural consequence of the following theorem from \citet{eckstein1992douglas}. 

\begin{theorem}
\label{thm:inexactadmmconvergence}
Consider a convex optimization problem in the primal form (P), $\textup{minimize} \; f(x) + h(Mx)$, where $x \in \mathbb{R}^d$, $M \in \mathbb{R}^{m \times d}$ has full column rank. Pick any $y^0$, $z^0 \in \mathbb{R}^m$, and $\rho >0 $, and summable sequences
\begin{equation*}
    \begin{aligned}
    & \{\varepsilon^k\}_{k = 0}^{\infty} \subseteq [0, \infty), \; \sum^{\infty}_{k = 0} \varepsilon^k < \infty,\\ & \{\nu^k\}_{k = 0}^{\infty} \subseteq [0, \infty), \; \sum^{\infty}_{k = 0} \nu^k < \infty, \\
    & \{\lambda^k\}_{k = 0}^{\infty} \subseteq (0, 2), \; 0 < \inf \lambda^k \le \sup \lambda^k < 2. \\
    \end{aligned}
\end{equation*}
The dual problem (D) of primal problem (P) is
\begin{equation*}
    \textup{maximize}_{y \in \mathbb{R}^m} \; -(f^*(-M^Ty) + g^*(y)). 
\end{equation*}
Suppose the primal and dual ADMM iterates $\{x^k\}_{k = 0}^{\infty}$, $\{z^k\}_{k = 0}^{\infty}$, and $\{y^k\}_{k = 0}^{\infty}$ satisfy the update equations to within errors given by  conform, for all $k$ to 
\begin{equation}
\label{summablecondition}
    \begin{aligned}
    & \bigg\|x^{k + 1} - \textup{argmin}_x \big\{ f(x) + \langle y^k, Mx \rangle \\ 
    & + \frac{1}{2} \rho \|Mx - z^k\|_2^2\big\}\bigg\|_2 \le \varepsilon^k, \\
    & \bigg\|z^{k + 1} - \textup{argmin}_z \big\{ h(z) - \langle y^k, z \rangle  \\ 
    & + \frac{1}{2} \rho \|\lambda^k Mx^{k + 1} - z + (1 - \lambda^k) z^k \|_2^2\big\}\bigg\|_2 \le \nu^k,\\
    & y^{k + 1} = y^k + \rho (\lambda^k Mx^{k + 1} + (1 - \lambda^k) z^k - z^{k + 1}).
    \end{aligned}
\end{equation}
Then if (P) has a Kuhn-Tucker pair, $\{x^k\}$ converges to a solution of (P) and $\{y^k\}$ converges to a solution of (D).  
\end{theorem}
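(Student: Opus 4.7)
The plan is to identify the inexact iteration described in \eqref{summablecondition} with an inexact Krasnoselskii--Mann relaxation of the Douglas--Rachford splitting operator applied to the dual problem (D), and then invoke the standard convergence theorem for averaged-operator iterations with summable errors (for instance, Combettes 2004, or Proposition 5.34 of Bauschke--Combettes). To set this up, reformulate (P) as $\min_{x,z} f(x) + h(z)$ subject to $Mx = z$. The dual can then be written as finding a zero of $Ay+By$, where $A = -M\,\partial f^*(-M^T\cdot)$ and $B = \partial h^*$ are maximal monotone operators on $\mathbb{R}^m$; by hypothesis a Kuhn--Tucker pair exists, so $\operatorname{zer}(A+B)$ is nonempty. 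Evaluating the resolvents $J_A = (I+\rho A)^{-1}$ and $J_B = (I+\rho B)^{-1}$ amounts, respectively, to solving the $x$- and $z$-subproblems displayed in \eqref{summablecondition}, with the minimizer recovered through $Mx^{k+1}$ and $z^{k+1}$.

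Next, introduce a Douglas--Rachford shadow sequence $s^k := y^k + \rho z^k$ (after absorbing the relaxation appropriately). A direct manipulation of the three update equations shows that, when $\varepsilon^k = \nu^k = 0$, one has $s^{k+1} = (1 - \lambda^k/2)\, s^k + (\lambda^k/2)\, R_B R_A\, s^k$, where $R_A = 2J_A - I$ and $R_B = 2J_B - I$ are the reflected resolvents; this is the classical Eckstein--Bertsekas identification of relaxed ADMM with generalized Douglas--Rachford splitting. The composition $R_B R_A$ is nonexpansive, so the right-hand side is an averaged map with relaxation parameter $\lambda^k/2 \in (0,1)$ bounded away from $0$ and $1$ by the hypothesis that $0 < \inf\lambda^k \le \sup \lambda^k < 2$. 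Its fixed-point set is in bijection with $\operatorname{zer}(A+B)$, hence nonempty.

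Now propagate the subproblem errors. Because $J_A$ and $J_B$ are firmly nonexpansive (hence $1$-Lipschitz) in their inputs, and the $y$-update is affine in $Mx^{k+1}$ and $z^{k+1}$, the bounds $\|x^{k+1} - x_\star^{k+1}\|_2 \le \varepsilon^k$ and $\|z^{k+1} - z_\star^{k+1}\|_2 \le \nu^k$ yield $s^{k+1} = T_{\lambda^k} s^k + e^k$ with $\|e^k\|_2 \le C(\varepsilon^k + \nu^k)$, where $C$ depends only on $\rho$, $\|M\|$, and $\sup\lambda^k$. Summability of $\{\varepsilon^k\}$ and $\{\nu^k\}$ then implies summability of $\{\|e^k\|_2\}$, and the inexact Krasnoselskii--Mann theorem gives $s^k \to s^\infty$ with $s^\infty$ a fixed point of the averaged operator, so $y^k \to y^\infty \in \operatorname{zer}(A+B)$, i.e., a solution of (D). Convergence of the primal iterate follows: the residual $Mx^{k+1} - z^{k+1} = (y^{k+1} - y^k)/\rho$ tends to zero, $z^k$ converges (using nonexpansiveness of $J_B$ and summability of $\nu^k$), and then the full column rank of $M$ yields convergence of $x^k$ to the unique $x^\infty$ satisfying $Mx^\infty = z^\infty$; checking that $(x^\infty, y^\infty)$ satisfies the KKT conditions of (P) is routine.

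The main obstacle is the error-propagation step: tracking how inexactness in the primal argmins translates into a summable perturbation of the dual shadow sequence under the nontrivial change of variables between the raw ADMM iterates $(x^k, z^k, y^k)$ and the Douglas--Rachford variable $s^k$. One must in particular verify that the relaxation $\lambda^k$ does not amplify errors uncontrollably, which is precisely where the boundedness condition $\sup\lambda^k < 2$ is used. Once the inexact iteration is placed in the averaged-operator framework, the conclusion is a direct invocation of the standard inexact Krasnoselskii--Mann convergence theorem.
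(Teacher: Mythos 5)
The paper does not prove this theorem at all: it is quoted (as Theorem 8) from \citet{eckstein1992douglas} and used as a black box in the proof of \cref{thm:NysADMMConv}. So the relevant comparison is with Eckstein--Bertsekas's own argument, and your sketch is essentially a faithful reconstruction of it: they too identify relaxed, inexact ADMM on (P) with a generalized Douglas--Rachford splitting applied to the dual pair of maximal monotone operators, and deduce convergence from a summable-error perturbation theorem. The one genuine difference is framing: Eckstein--Bertsekas show that generalized DRS is an instance of Rockafellar's \emph{inexact proximal point algorithm} applied to a ``splitting operator'' $S_{\rho,A,B}$, whereas you invoke the inexact Krasnoselskii--Mann theorem for averaged operators. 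These are two faces of the same fact (the resolvent of $S_{\rho,A,B}$ is the firmly nonexpansive DRS map, so relaxed PPA on $S$ is KM iteration of an averaged operator), and your version is the standard modern packaging (Bauschke--Combettes style); it buys a cleaner error-propagation statement at the cost of having to verify the change of variables $s^k = y^k + \rho z^k$ and the identity with $R_B R_A$ yourself, which is exactly the bookkeeping EB devote most of their proof to.

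Two sketch-level inaccuracies are worth fixing. First, your identity $Mx^{k+1} - z^{k+1} = (y^{k+1}-y^k)/\rho$ holds only when $\lambda^k \equiv 1$; under relaxation the update gives $(y^{k+1}-y^k)/\rho = \lambda^k(Mx^{k+1}-z^k) + (z^k - z^{k+1})$, so you should instead argue $y^{k+1}-y^k \to 0$ and $z^{k+1}-z^k \to 0$, then use $\inf_k \lambda^k > 0$ to conclude $Mx^{k+1}-z^k \to 0$. Second, you assert without justification that $A = -M\,\partial f^*(-M^T\cdot)$ is maximal monotone; monotonicity is clear, but maximality requires the subdifferential chain rule $\partial\bigl(f^*\circ(-M^T)\bigr) = -M\,\partial f^*(-M^T\cdot)$, which needs a qualification condition --- this, together with the strong convexity (hence unique solvability) of the $x$-subproblem, is precisely where the full-column-rank hypothesis on $M$ earns its keep in EB's proof, not only in the final primal-recovery step where you place it. Both points are repairable within your framework, so the proposal stands as a correct outline of the cited theorem's proof.
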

\subsubsection{Proof of \cref{thm:NysADMMConv}}
\begin{proof}
Consider optimization problem \eqref{eq1} and the associated NysADMM algorithm \cref{NysADMM}.
Suppose $\{\tilde x^k\}_{k = 0}^{\infty}$, $\{\tilde z^k\}_{k = 0}^{\infty}$, and $\{\tilde u^k\}_{k = 0}^{\infty}$ are generated by NysADMM iterations. 
Since $\ell(Ax, b)$ is quadratic with respect to $x$ and the smooth part $g$ of regularizer $r$ has constant Hessian, the $x$-subproblem of \eqref{eq1} is exactly the linear system \eqref{ADMMSubProb}. 

Let $x^{k+1}$ be the exact solution for the $x$-subproblem at iteration $k$. For all $k \ge 0$, NysADMM iterate $\tilde x^{k+1}$ satisfies $\|\tilde x^{k+1} - x^{k+1}\|_2 \le \varepsilon^k$. Let $M = I$, $\nu^k = 0$, $\lambda^k = 1$, $y^k = \rho \tilde u^k$ for all $k$, and $f(x) = \ell(Ax, b) + g(x)$.
By \cref{thm:inexactadmmconvergence}, $\{\tilde x^k\}_{k = 0}^{\infty}$, $\{\tilde z^k\}_{k = 0}^{\infty}$, and $\{\rho \tilde u^k\}_{k = 0}^{\infty}$ satisfy condition \eqref{summablecondition}. Therefore, if optimization problem \eqref{eq1} has a Kuhn-Tucker pair, $\{\tilde x^k\}$ converges to a solution of \eqref{eq1} and $\{\rho \tilde u^k\}$ converges to a solution of the dual problem of \eqref{eq1}.  

Next, we derive the bound for the number of Nystr{\"o}m PCG iterations $T^k$ required at NysADMM iteration $k$. 
Note that in this case the Hessians of $\ell$ and $g$ are constant. We only need to sketch once for the constant linear system matrix $A^TH^{\ell}(A \tilde x^k ; b)A + H^g( \tilde x^k)$ and can reuse the sketch for all NysADMM iterations. Since the Nystr{\"o}m preconditioner is constructed with sketch size $s \ge 8\left(\sqrt{d_{\textrm{eff}}(\rho)}+\sqrt{8\log(\frac{16}{\delta}})\right)^{2}$, by \cref{corr:ADMMSubProb}, with probability at least $1 - \delta$, after 
\begin{equation*}T^k \ge \bigg\lceil\frac{\log\left(\frac{16\|x^{k+1}\|_2}{\varepsilon^k}\right)}{\log(2)}  \bigg\rceil
\end{equation*} 
Nystr{\"o}m PCG iterations, we have $\|\tilde x^{k+1} - x^{k+1}\|_2 \le \varepsilon^k$. 
Recall the righthand side of linear system $\eqref{ADMMSubProb}$ $r^k$. The exact solution for the $x$-subproblem $x^{k + 1}$ at iteration $k$ satisfies $\|x^{k + 1}\|_2 \le \frac{\|r^k\|_2}{\rho}$.
We have 
\begin{equation*}
    \bigg\lceil\frac{\log\left(\frac{16\|x^{k+1}\|_2}{\varepsilon^k}\right)}{\log(2)}  \bigg\rceil \le \bigg\lceil\frac{\log\left(\frac{16\|r^k\|_2}{\varepsilon^k \rho}\right)}{\log(2)} \bigg\rceil.
\end{equation*}
Further, by assumption, as $\|r^k\|_2$ is bounded by a constant $R$ for all $k$, we have 
\begin{equation*}
 \bigg\lceil\frac{\log\left(\frac{16\|r^k\|_2}{\varepsilon^k \rho}\right)}{\log(2)} \bigg\rceil \le 4 +  \bigg\lceil\frac{\log\left(\frac{R}{\varepsilon^k \rho}\right)}{\log(2)} \bigg\rceil \le 4 +  \bigg\lceil2\log\left(\frac{R}{\varepsilon^k \rho}\right) \bigg\rceil. 
\end{equation*}
This gives the bound for the number of Nystr{\"o}m PCG iterations $T^k$ required at NysADMM iteration $k$
\end{proof}

\subsection{Proof of \Cref{thm:AdaNysADMM}}
\begin{proof}
By  hypothesis we have $s>8d_{\textup{eff}}(\frac{\epsilon\rho}{6})$, so \cref{KeyLemma} with $\gamma = 7$ yields 
\begin{align*}
\hat{\lambda}_s\leq \lambda_s & \leq \frac{1}{7}\frac{\epsilon\rho}{6} = \frac{\epsilon \rho}{42},
\end{align*}
Thus,
\[\frac{\hat{\lambda}_s+\rho}{\rho}\leq 1+\frac{\epsilon}{42}.\]
This gives the first statement.  
For the second statement we use our hypothesis on $s$ to apply \cref{lemma:NysErrLemma} with tolerance $\epsilon \rho/6$. From this we conclude $\|E\|\leq \epsilon \rho$ with probability at least $1-\delta$. Combining this with \cref{NysPCGThm} yields 
\[\kappa_{2}(P^{-1/2}H_{\rho}P^{-1/2})- \frac{\hat{\lambda}_s+\rho}{\rho}\leq \epsilon,\]
with probability at least $1-\delta$.
On the other hand, condition numbers always satisfy
\[\kappa_{2}(P^{-1/2}H_{\rho}P^{-1/2})\geq 1.\] 
Combining this with our upper bound on $\hat{\lambda}_{s}$ gives
\begin{align*}
    \kappa_{2}(P^{-1/2}H_{\rho}P^{-1/2})-\frac{\hat{\lambda}_s+\rho}{\rho}&\geq 1-(1+\epsilon/42)\\ 
    &= -\epsilon/42.
\end{align*}
Hence with probability at least $1-\delta$
\[\bigg|\kappa_{2}(P^{-1/2}H_{\rho}P^{-1/2})-\frac{\hat{\lambda}_s+\rho}{\rho}\bigg|\leq \epsilon.\]
\end{proof}

\section{Randomized Nystr{\"o}m approximation and Nystr{\"o}m PCG}
In this section we give the algorithms from \citet{frangella2021randomized} for the randomized Nystr{\"o}m approximation and Nystr{\"o}m PCG.
\label{section:AlgNysAppxNysPCG}
\begin{algorithm}[H]
	\centering
	\caption{Randomized Nystr{\"o}m Approximation}
	\label{Nyssketch}
	\begin{algorithmic}
		\INPUT{psd matrix $H \in \mathbb{S}_d^+(\mathbb{R})$, sketch size $s$}
		\STATE{$\Omega = \text{randn}(d, s)$} \COMMENT{Gaussian test matrix}
		\STATE{$\Omega = \text{qr}(\Omega, 0)$} \COMMENT{thin QR decomposition}
		\STATE{$Y = H\Omega$} \COMMENT{$s$ matvecs with $H$}
		\STATE{$\nu = \text{eps}(\text{norm}(Y, 2))$} \COMMENT{compute shift}
		\STATE{$Y_{\nu} = Y + \nu \Omega$} \COMMENT{add shift for stability}
		\STATE{$C = \text{chol}(\Omega^TY_{\nu})$} \COMMENT{Cholesky decomposition}
		\STATE{$B = Y_{\nu} / C$} \COMMENT{triangular solve}
		\STATE{$[U, \Sigma, \sim] = \text{svd}(B, 0)$} \COMMENT{thin SVD}
		\STATE{$\hat \Lambda = \text{max}\{0, \Sigma^2 - \nu I\}$} \COMMENT{remove shift, compute eigs}
		\OUTPUT{Nystr{\"o}m approximation $\hat H_{\text{nys}} = U \hat \Lambda U^T$}
	\end{algorithmic}
\end{algorithm} 

\begin{algorithm}[t]
	\centering
	\caption{Nystr{\"o}m PCG}
	\label{Nyspcg}
	\begin{algorithmic}
		\INPUT{psd matrix $H$, righthand side $r$, initial guess $x_0$, regularization parameter $\rho$, sketch size $s$, tolerance $\varepsilon$}
		\STATE{$[U, \hat \Lambda] = \text{RandomizedNystr{\"o}mApproximation}(H,s)$}
		\STATE{$w_0 = r - (H + \rho I)x_0$}
		\STATE{$y_0 = P^{-1}w_0$}
		\STATE{$p_0 = y_0$}
		\WHILE{$\|w\|_2 > \varepsilon$}
		\STATE{$v= (H + \rho I)p_0$} 
		\STATE{$\alpha = (w_0^T y_0) / (p_0^T v)$}
		\STATE{$x = x_0 + \alpha p_0$}
		\STATE{$w = w_0 - \alpha v$}
		\STATE{$y = P^{-1}w$}
		\STATE{$\beta = (w^T y) / (w_0^T y_0)$}
		\STATE{$x_0 \gets x$, $w_0 \gets w$, $p_0 \gets y + \beta p_0$, $y_0 \gets y$}
		\ENDWHILE
		\OUTPUT{approximate solution $\hat x$}
	\end{algorithmic}
\end{algorithm} 

\section{AdaNysADMM}
In this section we give the adaptive algorithm for computing the randomized Nystr{\"o}m approximation adopted from \citet{frangella2021randomized}. 
The adaptive algorithm has the benefit of reusing computation, in particular, we do not need to compute the sketch $Y$ from scratch. 
We simply add onto the sketch that we have already computed.
We also give the pseudo-code for AdaNysADMM that uses \cref{alg:AdaRandNysAppx} to compute the Nystr{\"o}m preconditioner.  
\begin{algorithm}[H]
\centering
\caption{AdaptiveRandNysAppx}
\label{alg:AdaRandNysAppx}
\begin{algorithmic}
    \INPUT{symmetric psd matrix $H$, initial rank $s_0$, tolerance $\textrm{Tol}$}
    \STATE{$Y = [\hspace{5pt}], \Omega = [\hspace{5pt}],$ and $(\hat{\lambda}_s+\rho)/\rho = \textrm{Inf}$}
    \STATE{$m = s_{0}$}
    \WHILE{($\hat{\lambda}_s+\rho)/\rho>\textrm{Tol}$}
        \STATE{generate Gaussian test matrix $\Omega_{0}\in \mathbb{R}^{n\times m}$}
		\STATE{$[\Omega_{0},\sim] = \textrm{qr}(\Omega_{0},0)$}
		\STATE{$Y_{0} = H\Omega_{0}$}
		\STATE{$\Omega = [\Omega\hspace{5pt}\Omega_{0}]$ and $Y = [Y\hspace{5pt}Y_{0}]$}
		\STATE{$\nu = \sqrt{n}\hspace{2pt}\textrm{eps}(\textrm{norm}(Y,2))$}
		\STATE{$Y_{\nu} = Y+\nu\Omega,\hspace{3pt}$}
		\STATE{$C = \textrm{chol}(\Omega^{T}Y_{\nu})$}
		\STATE{$B = Y_{\nu}/C$}
		\STATE{compute $[U,\Sigma,\sim] = \textrm{svd}(B,0)$}
		\STATE{$\hat{\Lambda} = \max\{0,\Sigma^{2}-\nu I\}$} \COMMENT{remove shift}
		\STATE{compute $(\hat{\lambda}_s+\rho)/\rho$}
		\STATE{$m\leftarrow s_{0}$, $s_{0} \leftarrow 2s_{0}$} \COMMENT{double rank if tolerance is not met}
		    \IF{$s_{0}>s_{\textrm{max}}$}
		    \STATE{$s_{0} = s_{0}-m$}  \COMMENT{when $s_{0}>s_{\textrm{max}}$, reset to $s_{0} = s_{\textrm{max}}$}
		    \STATE{$m = s_{\textrm{max}}-s_{0}$}
		    \STATE{generate Gaussian test matrix $\Omega_{0}\in \mathbb{R}^{n\times m}$}
		    \STATE{$[\Omega_{0},\sim] = \textrm{qr}(\Omega_{0},0)$}
		    \STATE{$Y_{0} = H\Omega_{0}$}
		    \STATE{$\Omega = [\Omega\hspace{5pt}\Omega_{0}]$ and $Y = [Y\hspace{5pt}Y_{0}]$}
		    \STATE{$\nu = \sqrt{n}\hspace{2pt}\textrm{eps}(\textrm{norm}(Y,2))$} \COMMENT{compute final approximation and break}
		    \STATE{$Y_{\nu} = Y+\nu\Omega,\hspace{3pt}$}
		    \STATE{$C = \textrm{chol}(\Omega^{T}Y_{\nu})$
		                \STATE $B = Y_{\nu}/C$}
		    \STATE{compute $[U,\Sigma,\sim] = \textrm{svd}(B,0)$}
		    \STATE{$\hat{\Lambda} = \max\{0,\Sigma^{2}-\nu I\}$}
		    \STATE {\textbf{break}}
		    \ENDIF
        \ENDWHILE
    \OUTPUT{Nystr{\"o}m approximation $(U,\hat{\Lambda})$}
\end{algorithmic}
\end{algorithm}

\begin{algorithm}[H]
	\centering
	\caption{AdaNysADMM}
	\label{alg:AdaNysADMM}
	\begin{algorithmic}
	\INPUT{feature matrix $A$, response $b$, loss function $\ell$, regularization $g$ and $h$, stepsize $\rho$, positive summable sequence $\{\varepsilon^k\}_{k = 0}^{\infty}$}
	    \STATE{$[U, \hat \Lambda] =  \text{AdaptiveRandNysAppx}(A^T H^\ell A + H^g,s)$} \COMMENT{use \cref{alg:AdaRandNysAppx}}
		\REPEAT
		\STATE{find $\tilde x^{k+1}$ that solves \eqref{ADMMSubProb} within tolerance $\varepsilon^k$ by Nystr{\"o}m PCG}
		\STATE{$\tilde z^{k+1} = \text{argmin}_z \{h(z) + \frac{\rho}{2}\|\tilde x^{k+1} - z + \tilde u^k\|^2_2\}$}
		\STATE{$\tilde u^{k+1} = \tilde u^k + \tilde x^{k+1} - \tilde z^{k+1} $}
		\UNTIL convergence
	\end{algorithmic}
\end{algorithm}


\end{document}